
\documentclass{amsart}
\usepackage{amsfonts,amssymb,amsmath,amsthm}
\usepackage{url}
\usepackage{enumerate}

\urlstyle{sf}

\newcommand{\C}{\mathbb{C}}

\newcommand{\QQ}{\mathbb{Q}}
\newcommand{\NN}{\mathbb{N}}

\newcommand{\ima}{\hbox{Im}}
\newcommand{\id}{\hbox{id}}
\newcommand{\grif}{\hbox{Griff}}

\newcommand{\gr}{\hbox{Gr}}

\newcommand{\wt}{\widetilde}

\newcommand{\rom}{\romannumeral}

\newtheorem{theorem}{Theorem}[section]

\newtheorem{lemma}[theorem]{Lemma}
\newtheorem{corollary}[theorem]{Corollary}
\newtheorem{proposition}[theorem]{Proposition}
\newtheorem{conjecture}[theorem]{Conjecture}
\newtheorem{nonumbering}{Theorem}

\newtheorem{nonumberingc}{Corollary}

 \theoremstyle{definition}
 \newtheorem{definition}[theorem]{Definition}
 \newtheorem{convention}{Conventions}

 \theoremstyle{remark}
 \newtheorem{remark}[theorem]{Remark}

\begin{document}
\author[Robert Laterveer]
{Robert Laterveer}
\address{Institut de Recherche Math\'ematique Avanc\'ee, Universit\'e 
de Strasbourg, 7 Rue Ren\'e Des\-car\-tes, 67084 Strasbourg CEDEX, France.}
\email{robert.laterveer@math.unistra.fr}

\date{\today}

\keywords{Algebraic cycles \and Chow groups \and Finite--dimensional motives}

\subjclass{14C15, 14C25, 14C30 }

\title{A brief note concerning hard Lefschetz for Chow groups}

\begin{abstract} We formulate a conjectural hard Lefschetz property for Chow groups, and prove this in some special cases: roughly speaking, for varieties with finite--dimensional motive, and for varieties whose self--product has vanishing middle--dimensional Griffiths group. An appendix includes related statements that follow from results of Vial.
\end{abstract}
\vskip 1cm

\maketitle

\section{Introduction}

The Bloch--Beilinson conjectures can be seen as a formidable heuristic guide that predicts the structure of Chow groups of algebraic varieties, and the precise way Chow groups are influenced by singular cohomology (cf. \cite{J2}, \cite{M}, \cite{Vo}, \cite{MNP}). To get a glimpse of this heuristic, let us look at what the Bloch--Beilinson conjectures say concerning
the hard Lefschetz property on the level of Chow groups.

Let $X$ be a smooth projective variety over $\C$ of dimension $n$, equipped with an ample line bundle $L$. Let $A^jX_{\QQ}$ denote the Chow group of codimension $j$ algebraic cycles with $\QQ$ coefficients. It is expected that $A^j_{AJ}X_{\QQ}$ (the subgroup of Abel--Jacobi trivial cycles) only depends on the cohomology groups
  \[ H^{2j-2}(X,\QQ),H^{2j-3}(X,\QQ),\ldots, H^{j}(X,\QQ)\ .\]
This leads to the following expectation:

\begin{conjecture}\label{conjinj} Let $X$ be a smooth projective variety of dimension $n$, and $L$ an ample line bundle. Then intersection induces maps
  \[ \cdot L^{n-2j+2}\colon\ \ A^j_{AJ}(X)_{\QQ}\ \to\ A^{n-j+2}(X)_{\QQ}\]
  that are injective for $2j-2\le n$.
  \end{conjecture}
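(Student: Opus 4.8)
The plan is to deduce the conjecture from the existence of a Bloch--Beilinson--Murre filtration $F^\bullet$ on the rational Chow groups, i.e.\ precisely the structure predicted by the heuristic recalled in the introduction. Recall the properties one expects of such a filtration (see e.g.\ \cite{MNP}, \cite{J2}): it is descending and separated, with $F^0A^j(X)_{\QQ}=A^j(X)_{\QQ}$, with $F^1$ the homologically trivial cycles and (Beilinson) $F^2A^j(X)_{\QQ}=A^j_{AJ}(X)_{\QQ}$; it is finite, with $F^{j+1}A^j(X)_{\QQ}=0$; and every correspondence acts on it, inducing on the graded piece $\gr^i_F A^j(X)_{\QQ}:=F^i/F^{i+1}$ a map governed by its action on $H^{2j-i}(X,\QQ)$. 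In particular a correspondence acting injectively on $H^{2j-i}(X,\QQ)$ acts injectively on $\gr^i_F A^j(X)_{\QQ}$. The strategy is then to check injectivity graded piece by graded piece and assemble.

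Intersection with $L$ is induced by a correspondence supported on the diagonal acting on cohomology as the Lefschetz operator $\cup\, c_1(L)$; it therefore preserves the index $i$, sending $\gr^i_F A^j(X)_{\QQ}$ to $\gr^i_F A^{j+1}(X)_{\QQ}$ through $\cup\, c_1(L)\colon H^{2j-i}(X,\QQ)\to H^{2j-i+2}(X,\QQ)$. Iterating, $\cdot L^{n-2j+2}$ sends $\gr^i_F A^j(X)_{\QQ}$ to $\gr^i_F A^{n-j+2}(X)_{\QQ}$ governed by
\[ L^{\,n-2j+2}\colon\ H^{2j-i}(X,\QQ)\ \to\ H^{\,2n-2j+4-i}(X,\QQ)\ .\]
Set $k=2j-i$ and $m=n-2j+2$. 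For $i\ge 2$ the hypothesis $2j-2\le n$ gives $k=2j-i\le 2j-2\le n$ and $m=n-2j+2\le n-2j+i=n-k$; a consequence of the hard Lefschetz theorem (factor the isomorphism $L^{n-k}=L^{n-k-m}\circ L^{m}$) is that $L^{m}\colon H^{k}\to H^{k+2m}$ is injective whenever $0\le m\le n-k$, and in fact an isomorphism at $i=2$, which is what pins down both the power $n-2j+2$ and the bound $2j-2\le n$. Hence the displayed map is injective for every $i\ge 2$, so $\cdot L^{n-2j+2}$ is injective on each $\gr^i_F A^j(X)_{\QQ}$ with $i\ge 2$. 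Since $A^j_{AJ}(X)_{\QQ}=F^2A^j(X)_{\QQ}$ carries the finite filtration $F^2\supseteq F^3\supseteq\cdots\supseteq F^{j+1}=0$ and $\cdot L^{n-2j+2}$ is filtered and injective on all the relevant graded pieces, a standard argument (lift a kernel element to the deepest step on which it is nonzero and contradict injectivity of that $\gr^i$) shows it is injective on $F^2A^j(X)_{\QQ}$, which is the assertion.

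The single serious obstacle is the input, not the output: the existence of a filtration $F^\bullet$ with the functoriality that correspondences act on $\gr^i_F$ through cohomology is itself among the deepest and still entirely open parts of the Bloch--Beilinson conjectures, which is exactly why the assertion is recorded here only as a conjecture. A route to an unconditional theorem would be to construct, for the given $X$, enough of this filtration --- equivalently, the relevant part of a Chow--Künneth decomposition of the diagonal --- to run the graded-piece argument above; this is precisely where additional structure becomes indispensable, for instance a finite--dimensional motive, or the vanishing of the middle Griffiths group of $X\times X$, each of which supplies the projectors needed to make the steps rigorous. For an arbitrary smooth projective $X$ I do not expect to establish the statement, since the Lefschetz numerics are classical and the entire weight of any complete proof rests on constructing the filtration.
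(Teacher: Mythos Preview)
The statement you address is recorded in the paper as a \emph{conjecture}, not a theorem: the paper offers no proof, only the Bloch--Beilinson heuristic sketched in the paragraph preceding it. Your write-up is a careful elaboration of exactly that heuristic, and you correctly flag that the whole argument is conditional on the existence of a Bloch--Beilinson--Murre filtration with the expected properties. So there is no paper-proof to compare against; your text functions as motivation for the conjecture rather than a proof, and you say as much in your final paragraph.

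One technical point deserves tightening. The standard axiom for the BB filtration is that the action of a correspondence $\Gamma$ on $\gr^i_F A^j$ depends only on the homology class of $\Gamma$ (equivalently: $\Gamma$ homologically trivial implies $\Gamma$ acts as $0$ on the graded). From this alone one does \emph{not} get that ``$\Gamma$ injective on $H^{2j-i}$ implies $\Gamma$ injective on $\gr^i_F A^j$''; what one needs is an \emph{algebraic} left inverse $C$ to $L^{n-2j+2}$ on $H^{2j-i}$, so that $C\circ L^{n-2j+2}$ and $\Delta$ have the same cohomology class and hence act identically on the graded piece. For the Lefschetz operator such a $C$ exists precisely when the Lefschetz standard conjecture $B(X)$ holds. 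This is of course subsumed by the full Bloch--Beilinson package, but it is worth making the dependence explicit, since $B(X)$ is exactly one of the hypotheses the paper isolates in its unconditional results.

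Where the paper actually proves something (Theorem~\ref{main}), the route is genuinely different from your outline: rather than assuming a filtration and reading off injectivity on the gradeds, the paper builds explicit lifts $\Pi_i$, $P_i$ of the K\"unneth components, uses the coniveau hypothesis to confine the ``middle'' $P_i$ to a subvariety of small dimension, and then invokes a nilpotence theorem (Kimura's, or Voevodsky--Voisin's) to upgrade a cohomological decomposition of the diagonal to one in $A^n(X\times X)_{\QQ}$. Injectivity is then extracted by checking which pieces can act nontrivially on $A^j_{AJ}$. This avoids assuming the BB filtration, at the cost of the extra hypotheses (finite-dimensionality or vanishing Griffiths group, $B(X)$, the coniveau condition, and the Voisin standard conjecture).
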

  
This type of conjecture is formulated and studied in \cite{Fu}. In particular, conjecture \ref{conjinj} implies a certain weak Lefschetz property for Chow groups:
if $Y\subset X$ is a smooth ample hypersurface, restriction $A^j(X)_{\QQ}\to A^j(Y)_{\QQ}$ is injective in the range $j<n/2$; such a weak Lefschetz property was conjectured in 1974 by Hartshorne \cite{Ha}. 

Unlike cohomology, Chow groups get increasingly complicated in higher codimension (as attested by the group $A^n(X)$ of $0$--cycles, which is in general ``very large'' \cite{M}, \cite{BS}; precisely: using \cite{BS}, one sees that $\cdot L\colon A^{n-1}(X)_{\QQ}\to A^n(X)_{\QQ}$ can not surject unless $p_g(X)=0$). For this reason, in general one cannot expect the surjectivity part of the hard Lefschetz theorem to carry over from cohomology to Chow groups. In the special case where $X$ has a small Hodge diamond, however, one may expect a surjectivity statement on the level of Chow groups---as we now proceed to explain.

For simplicity, let's restrict to the case of $0$--cycles. It is expected that if 
  \[H^{n}(X,\QQ), \ldots,H^{n+r-1}(X,\QQ)\] 
  are supported in codimension $1$, then $A^n(X)_{\QQ}$ is determined by
  \[ H^{2n}(X,\QQ), H^{2n-1}(X,\QQ),\ldots, H^{n+r}(X,\QQ)\ .\]
 (This expectation can be made more precise by introducing the conjectural Bloch--Beilinson filtration $F^\ast$ on $A^n$, and stipulating that the various gradeds depend on the various cohomology groups, cf. \cite{J2}.)
Thus one is led to the following expectation:

\begin{conjecture}\label{conjsurj} Let $X$ be a smooth projective variety of dimension $n$, and $L$ an ample line bundle. Suppose $H^i(X,\QQ)=N^1 H^i(X,\QQ)$ for $n\le i<n+r$. Then intersection induces surjective maps
  \[ \cdot L^r\colon\ \ A^{n-r}(X)_{\QQ}\ \to\ A^n(X)_{\QQ}\ .\]
  \end{conjecture}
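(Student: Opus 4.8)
We do not know how to prove Conjecture \ref{conjsurj} in general; here is a strategy that goes through when $X$ has finite--dimensional motive, or when $\grif^n(X\times X)_{\QQ}=0$.

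\textbf{Step 1: the strategy.} Fix a divisor $D\subset X$ such that $H^i(X,\QQ)$ is supported on $D$ for every $i$ with $n\le i<n+r$. We aim to decompose the class of the diagonal as
  \[[\Delta_X]=[\Gamma]+[\Lambda]\quad\text{in }H^{2n}(X\times X,\QQ),\]
where $\Gamma$ is an algebraic cycle supported on $D\times X$ and $\Lambda$ is an algebraic cycle for which $\Lambda_\ast$ maps $A^n(X)_{\QQ}$ into $L^r\cdot A^{n-r}(X)_{\QQ}$; observe that $\Gamma_\ast$ vanishes on $A^n(X)_{\QQ}=A_0(X)_{\QQ}$, since a general $0$--cycle avoids $D$ and such cycles generate $A_0(X)_{\QQ}$. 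Granting this, $\gamma:=\Delta_X-\Gamma-\Lambda\in A^n(X\times X)_{\QQ}$ is homologically trivial, and for every $z\in A^n(X)_{\QQ}$ one has $z=\Gamma_\ast z+\Lambda_\ast z+\gamma_\ast z\equiv\gamma_\ast z\pmod{L^r\cdot A^{n-r}(X)_{\QQ}}$; it then remains to handle the term $\gamma_\ast z$.

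\textbf{Step 2: the cohomological decomposition.} By hard Lefschetz, $\cdot L^{n-i}\colon H^i(X,\QQ)\xrightarrow{\sim}H^{2n-i}(X,\QQ)$ for $i\le n$, and, via the Lefschetz decomposition, the image of $\cup L^r$ on $H^\ast(X,\QQ)$ is the sum of the Lefschetz components of index $\ge r$. Using this together with the hypothesis $H^i(X,\QQ)=N^1H^i(X,\QQ)$ for $n\le i<n+r$, one splits a cohomological Künneth decomposition of $[\Delta_X]$ into the desired two summands: the Künneth components in degrees $i$ with $n\le i<n+r$ can be represented by cycles supported on $D\times X$, while the remaining components either are of the form $\Gamma_{L^r}\circ\Psi$ (with $\Gamma_{L^r}$ the graph of $\cdot L^r$) --- because in the relevant cohomological range whatever is not supported on $D$ lies in the image of $\cup L^r$ --- or contribute to $A^n(X)_{\QQ}$ only through the Albanese part $L^{n-1}\cdot\mathrm{Pic}^0(X)_{\QQ}$, which is itself contained in $L^r\cdot A^{n-r}(X)_{\QQ}$.

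\textbf{Step 3: handling $\gamma$.} If $X$ has finite--dimensional motive, then by Kimura's nilpotence theorem the homologically trivial correspondence $\gamma$ is nilpotent for composition, say $\gamma^{\circ N}=0$; since the congruence of Step 1 holds with $\gamma_\ast z$ in place of $z$, iteration gives $z\equiv\gamma_\ast z\equiv\cdots\equiv\gamma^{\circ N}_\ast z=0$, so $z\in L^r\cdot A^{n-r}(X)_{\QQ}$. If instead $\grif^n(X\times X)_{\QQ}=0$, then $\gamma$, being homologically trivial of codimension $n=\tfrac12\dim(X\times X)$, is algebraically trivial; an algebraically trivial correspondence carries $A^n(X)_{\QQ}$ into the Albanese part $L^{n-1}\cdot\mathrm{Pic}^0(X)_{\QQ}\subseteq L^r\cdot A^{n-r}(X)_{\QQ}$, so $\gamma_\ast z\in L^r\cdot A^{n-r}(X)_{\QQ}$ and hence $z\in L^r\cdot A^{n-r}(X)_{\QQ}$. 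In either case Conjecture \ref{conjsurj} follows.

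\textbf{The main difficulty.} The heart of the proof is Step 2 --- organising the Lefschetz decomposition so that hard Lefschetz together with the coniveau hypothesis really do produce a decomposition of the \emph{class of the diagonal} of the precise shape required in Step 1 --- combined with the point, essential in Step 3, that one needs the \emph{cycle} $\gamma$, not merely its cohomology class, to be nilpotent or algebraically trivial. This is exactly where finite--dimensionality of $h(X)$, respectively the vanishing of the middle Griffiths group of $X\times X$, is indispensable; without such an input the construction yields only the corresponding (essentially formal) statement in cohomology.
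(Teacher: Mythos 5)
The statement you are proving is a \emph{conjecture} in the paper: the author does not prove Conjecture \ref{conjsurj}, and only establishes special cases of it in Theorem \ref{main}, under substantially stronger hypotheses --- namely $H^i(X,\QQ)=N^rH^i(X,\QQ)$ for $i\in[n-r+1,n]$ (coniveau $r$, not $1$), the Lefschetz standard conjecture $B(X)$, and the Voisin standard conjecture \ref{csv}, in addition to finite--dimensionality or $\grif^n(X\times X)_{\QQ}=0$. Your Steps 1 and 3 reproduce the skeleton of that proof (a Bloch--Srinivas--type decomposition of the diagonal, corrected from cohomology to Chow groups by Kimura or Voisin--Voevodsky nilpotence), but Step 2 --- the only step with real content --- is asserted rather than proved, and it is exactly the place where those extra hypotheses are unavoidable.

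Concretely, Step 2 has three gaps. (a) To treat the K\"unneth components $\pi_i$ as algebraic cycles at all you need the algebraicity of the K\"unneth decomposition, which the paper extracts from $B(X)$; you assume nothing of the sort. (b) For a correspondence to annihilate $A_0(X)_{\QQ}$ it must be supported on $D\times X$, i.e.\ one must control the \emph{first} tensor factor $H^{2n-i}(X)$ of $\pi_i$; your hypothesis $H^i=N^1H^i$ for $n\le i<n+r$ controls the second factor, and for $i>n$ it is automatically true (Gysin push--forward from a hyperplane section surjects onto $H^i$ for $i>n$ by hard Lefschetz), so it carries no information about $H^{2n-i}$ when $2n-i<n$. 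This is precisely why Theorem \ref{main} assumes $H^i=N^rH^i$ in the range $[n-r+1,n]$, which after transposition puts all the middle components $\pi_i$, $i\in[n-r+1,n+r-1]$, on $Z\times Z$ with $Z$ of codimension $r$. Moreover, even where the coniveau condition does hold, passing from ``the class $\pi_i$ vanishes on the complement of $Z\times Z$'' to ``$\pi_i$ is the push--forward of an algebraic cycle on $Z\times Z$'' is exactly the content of the Voisin standard conjecture (or a case of the Hodge conjecture), which cannot be waved away. (c) Writing the remaining components in the form $\Gamma_{L^r}\circ\Psi$ with $\Psi$ an algebraic correspondence requires an algebraic inverse to the hard Lefschetz isomorphism (the correspondences $C_i$ of the paper's proof), i.e.\ $B(X)$ again; membership of a cohomology class in the image of $\cup L^r$ does not produce such a factorization of cycles. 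Finally, in Step 3 your claim that an algebraically trivial correspondence sends $A^n(X)_{\QQ}$ into $L^{n-1}\cdot\mathrm{Pic}^0(X)_{\QQ}$ is unjustified; the correct tool in the Griffiths branch is the Voisin--Voevodsky nilpotence theorem \ref{VV}, applied exactly as Kimura's theorem is in the finite--dimensional branch.
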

  
 (Here $N^\ast$ denotes the coniveau filtration on cohomology (definition \ref{con}).) In particular, conjecture \ref{conjsurj} implies a ``weak Lefschetz--type'' property: under the hypotheses of conjecture \ref{conjsurj}, $A^n(X)_{\QQ}$ is supported on a codimension $r$ complete intersection $Y\subset X$.
 
 The main result of this note shows conjectures \ref{conjinj} and \ref{conjsurj} can be proven in some special cases:
 
 \begin{nonumbering}[=theorem \ref{main}] Suppose the Voisin standard conjecture holds. Let $X$ be a smooth projective variety of dimension $n$, and suppose

\item{(\rom1)} Either the motive of $X$ is finite--dimensional, or $\grif^n(X\times X)_{\QQ}=0$;

\item{(\rom2)} The Lefschetz standard conjecture $B(X)$ holds;

\item{(\rom3)} $H^i(X,\QQ)_{}= N^r H^i(X,\QQ)$ for all $i\in [n-r+1,n]$.

Then for any ample line bundle $L$, the map
\[\begin{split}
 \cdot L^r\colon&\ \ A^j_{AJ}(X)_{\QQ}\ \to\ A^{j+r}_{AJ}(X)_{\QQ}\ ,\\
  % \cdot L^r\colon&\ \ \grif^jX_{\QQ}\ \to\ \grif^{j+r}X_{\QQ} \\
   \end{split}   \]
is injective for $j\le r+1$, and
 \[ 
 \begin{split}
 \cdot L^r&\colon\ \ A^j(X)_{\QQ}\ \to\ A^{j+r}(X)_{\QQ}\ ,\\
% \cdot L^r&\colon\ \ \grif^{j-1}X_{\QQ}\ \to\ \grif^{j+r-1}X_{\QQ}\\
      \end{split}\]
 is surjective for $j>n-2r$.
 \end{nonumbering}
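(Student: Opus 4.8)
The plan is to treat the two statements niveau piece by niveau piece, reducing each to ordinary hard Lefschetz in cohomology; the engine is the refined Chow--Künneth calculus that hypotheses (i), (ii) and the Voisin standard conjecture make available. First I would dispose of the fact that an ample class need not be ``pure'': writing $L=L_{(0)}+L_{(1)}$ for its niveau components, $L_{(0)}\in A^1_{(0)}(X)_{\QQ}$ is again ample and $L^r=L_{(0)}^r+(\text{terms of niveau}\ge 1)$, so since intersecting with a niveau-$q$ class shifts niveau by $+q$, a triangular induction over the niveau grading reduces both assertions to the case $L=L_{(0)}$, which I assume henceforth; then $\cdot L^r$ is homogeneous of niveau $0$.

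Under (i), (ii) and the Voisin standard conjecture, $X$ carries a self--dual Chow--Künneth decomposition refined by the niveau filtration (in the style of Vial): a splitting $A^j(X)_{\QQ}=\bigoplus_{p\ge0}A^j_{(p)}(X)_{\QQ}$ with $A^j_{(p)}(X)=(\pi_{2j-p})_\ast A^j(X)_{\QQ}$, satisfying $A^j_{AJ}(X)_{\QQ}\subseteq\bigoplus_{p\ge2}A^j_{(p)}(X)_{\QQ}$, on which $\cdot L^r$ restricts to $A^j_{(p)}(X)\to A^{j+r}_{(p)}(X)$ intertwined with $L^r\colon H^{2j-p}(X,\QQ)\to H^{2j-p+2r}(X,\QQ)$, and --- this is the key estimate --- with the \emph{niveau bound} that $A^j_{(p)}(X)\ne0$ forces $H^{2j-p}(X,\QQ)\ne0$ of coniveau at most $j-p$ (push the summand $h^{2j-p}(X)$, a direct summand of $h^{2j-p-2c}(\wt Z)(-c)$ when the coniveau is $c$, down to a resolution $\wt Z$ of the relevant codimension-$c$ subvariety, where the Bloch--Beilinson filtration on $A^{j-c}(\wt Z)_{\QQ}$ has length $\le j-c+1$).

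The lifting lemma is then: if $L^r\colon H^m(X,\QQ)\to H^{m+2r}(X,\QQ)$ is injective, resp. surjective, then $\cdot L^r\colon A^j_{(p)}(X)_{\QQ}\to A^{j+r}_{(p)}(X)_{\QQ}$ is injective, resp. surjective, where $m=2j-p$. Indeed $B(X)$ provides the Lefschetz operator as an algebraic self--correspondence, hence a $\gamma\in A^{n-r}(X\times X)_{\QQ}$ which, after conjugating by the Künneth projectors, inverts $\cdot L^r$ on the $m$-th piece modulo homological equivalence; the error is a homologically trivial self--correspondence supported on $h^m(X)$, resp. $h^{m+2r}(X)$, hence acts nilpotently on the relevant Chow group --- by Kimura's nilpotence theorem when the motive is finite--dimensional, or, when $\grif^n(X\times X)_{\QQ}=0$, because it is then algebraically trivial and algebraically trivial self--correspondences raise the Bloch--Beilinson filtration. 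It remains to verify the cohomological hypothesis on every surviving piece. Ordinary hard Lefschetz turns (iii) into: $H^i(X,\QQ)$ has coniveau $\ge r$ for $i\in[n-r+1,n]$ and $\ge r+(i-n)$ for $i\in[n,n+r-1]$. For injectivity ($j\le r+1$): if $A^j_{(p)}(X)\ne0$ with $p\ge2$ then the niveau bound gives coniveau $\le j-p\le r-1$, and combined with $2j-p\le 2r$ this forces $2j-p\le n-r$, where $L^r$ is injective. For surjectivity ($j>n-2r$): if $A^{j+r}_{(p)}(X)\ne0$ and $2j-p<n-r$, then $2j-p+2r$ lies in $[n-r+1,n+r-1]$, and the propagated coniveau bound in that degree contradicts the niveau bound coniveau $\le (j+r)-p$ (a short arithmetic using $j>n-2r$); hence $2j-p\ge n-r$, where $L^r$ is surjective. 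Apply the lifting lemma and sum over $p$.

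The main obstacle is the input of the previous two paragraphs rather than its use: having at once the niveau--refined Chow--Künneth decomposition with its niveau bound and the passage from singular cohomology to rational equivalence is precisely what consumes (i), (ii) and the Voisin standard conjecture, and the $\grif^n(X\times X)_{\QQ}=0$ variant in particular leans on the somewhat delicate principle that an algebraically trivial self--correspondence raises the Bloch--Beilinson filtration of the Chow groups. By comparison the numerology closing the argument, although it looks fussy, is elementary once (iii) has been propagated by hard Lefschetz.
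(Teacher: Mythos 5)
Your proposal presupposes machinery that the hypotheses do not supply. Concretely: the niveau--refined Chow--K\"unneth decomposition $A^j(X)_{\QQ}=\bigoplus_p(\pi_{2j-p})_\ast A^j(X)_{\QQ}$ with the inclusion $A^j_{AJ}(X)_{\QQ}\subseteq\bigoplus_{p\ge2}A^j_{(p)}(X)_{\QQ}$ is Murre's conjecture, and your ``niveau bound'' is justified by appealing to the finite length of the Bloch--Beilinson filtration on $A^{j-c}(\wt Z)_{\QQ}$ --- both of these are open, and neither follows from (i), (ii) and the Voisin standard conjecture. (A decomposition of roughly this kind is available via Vial's work, but only for $n\le 5$ or under extra total-niveau hypotheses; that is exactly the route taken in the paper's appendix, with correspondingly restricted conclusions.) The same problem recurs in your treatment of the alternative $\grif^n(X\times X)_{\QQ}=0$ in (i): you invoke the principle that algebraically trivial self--correspondences raise the Bloch--Beilinson filtration, which is again conjectural, whereas what is actually available unconditionally is the Voisin--Voevodsky nilpotence theorem (an algebraically trivial self--correspondence is nilpotent as a correspondence). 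Finally, your opening reduction to a ``niveau-pure'' ample class is both unnecessary and unsupported: $L_{(0)}$ is only defined via the very decomposition in question, and ``ample'' is not a meaningful attribute of such a component.

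The paper's proof avoids all of this by never constructing orthogonal idempotents at the level of Chow groups. It chooses \emph{arbitrary} cycle-level representatives of the K\"unneth components, arranged only so that each one has a useful factorization: for $i\le n-r$ it ends in $\circ\, L^{n-i}$ with $n-i\ge r$; for $n-r<i<n+r$ it is pushed forward from $Z\times Z$ with $\mathrm{codim}\,Z=r$ (this is the one place the Voisin standard conjecture is used, to make the coniveau-$r$ class algebraic on $Z\times Z$); for $i\ge n+r$ it factors through a correspondence landing in $A^{j+n-i}$ (for injectivity) or begins with $L^{i-n}\circ{}$ (for surjectivity). Applying Kimura or Voisin--Voevodsky nilpotence to $\Delta-\sum(\text{representatives})$ and expanding the $N$-th power writes $\Delta$ as a sum of compositions; a head/tail analysis shows the only monomials acting on $A^j_{AJ}$ for $j\le r+1$ (resp.\ on $A^{j+r}$ for $j>n-2r$) are of the form $(\text{something})\circ L^r$ (resp.\ $L^r\circ(\text{something})$), which gives injectivity (resp.\ surjectivity) directly. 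If you want to salvage your approach, you would have to either prove the Murre/Bloch--Beilinson inputs you are assuming, or restrict to the setting of Vial's theorems as in the appendix.
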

  
 The Voisin standard conjecture \cite{V0} is explained in conjecture \ref{csv} below. For the notion of finite--dimensional motive, cf. \cite{Kim3} and \cite{An}; let us merely note that conjecturally all varieties have finite--dimensional motive \cite{Kim3}, and that there are quite a few varieties known to have finite--dimensional motive (cf. section 2 below).
 
  In certain cases, some of the hypotheses are automatically satisfied and the statement simplifies somewhat; for instance, there are the following corollaries:
 
 \begin{nonumberingc}[=corollary \ref{nocsv}] Let $X$ be a smooth projective 3fold, rationally dominated by a product of curves. Suppose $A^3(X)_{\QQ}$ is supported on a divisor. Then for any ample line bundle $L$, the map 
    \[\cdot L\colon\ \ A^2_{AJ}(X)_{\QQ}\ \to\ A^3_{AJ}(X)_{\QQ}\]
    is an isomorphism.
    
  (In particular, for any ample hypersurface $Y\subset X$, restriction 
   $ A^2_{AJ}(X)_{\QQ}\ \to\ A^2_{}(Y)_{\QQ}$
   is injective, and push--forward
   $A^2(Y)_{\QQ}\ \to\ A^3(X)_{\QQ}$
   is surjective.)
   \end{nonumberingc}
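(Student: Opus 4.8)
The plan is to recognize this as the case $n=3$, $r=1$ of theorem \ref{main}, to check that for $X$ as in the statement the three hypotheses of that theorem — together with the Voisin standard conjecture — are automatically satisfied, and then to trace through the conclusion. The one point requiring a little extra care is that theorem \ref{main} literally provides injectivity on the Abel--Jacobi trivial parts and surjectivity on the full Chow groups, so I will need to see that under hypothesis (iii) these combine to the asserted isomorphism.

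First I would verify hypothesis (iii), which for $n=3$, $r=1$ asks that $H^3(X,\QQ)=N^1H^3(X,\QQ)$. This is the classical consequence, via the Bloch--Srinivas decomposition of the diagonal \cite{BS}, of the hypothesis that $A^3(X)_{\QQ}$ is supported on a divisor: that hypothesis yields $\Delta_X=\Gamma_1+\Gamma_2$ in $A^3(X\times X)_{\QQ}$ with $\Gamma_1$ supported on $X\times D$ and $\Gamma_2$ on $D'\times X$ for divisors $D,D'\subset X$, and letting both sides act on $H^3(X,\QQ)$ one checks that $\Gamma_1^\ast$ and $\Gamma_2^\ast$ both have image in $N^1H^3(X,\QQ)$ (the first because its action factors through a Gysin map out of $H^1$ of a resolution of $D$; the second because it factors through $H^3$ of a resolution of the surface $D'$, which coincides with its own coniveau-one part), so that $\id=\Gamma_1^\ast+\Gamma_2^\ast$ forces $H^3=N^1H^3$. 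For hypotheses (i) and (ii) and the Voisin standard conjecture I would argue that, $X$ being rationally dominated by a product of curves, the motive of $X$ is finite--dimensional and of abelian type (being a direct summand of the motive of a blow-up of a product of curves); hence (i) holds, the Lefschetz standard conjecture $B(X)$ holds — giving (ii) — and likewise the Voisin standard conjecture holds for $X$, its conclusion being available for finite--dimensional motives of abelian type via the classical standard conjectures.

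Theorem \ref{main} then applies, yielding injectivity of $\cdot L\colon A^2_{AJ}(X)_{\QQ}\to A^3_{AJ}(X)_{\QQ}$ and surjectivity of $\cdot L\colon A^2(X)_{\QQ}\to A^3(X)_{\QQ}$. To upgrade this to an isomorphism on the Abel--Jacobi trivial parts, I would use the Bloch--Beilinson type filtration $F^\bullet$ supplied by finite--dimensionality: since $H^3(X,\QQ)=N^1H^3(X,\QQ)$, the deepest graded piece $\mathrm{gr}^3_F A^3(X)_{\QQ}$, which is governed by $H^3(X)$, vanishes, so that $A^3_{AJ}(X)_{\QQ}=F^2A^3(X)_{\QQ}=\mathrm{gr}^2_F A^3(X)_{\QQ}$ is governed by $H^4(X)$; meanwhile $A^2_{AJ}(X)_{\QQ}=F^2A^2(X)_{\QQ}=\mathrm{gr}^2_F A^2(X)_{\QQ}$ is governed by $H^2(X)$, and the two are related through $\cup\, c_1(L)\colon H^2(X,\QQ)\to H^4(X,\QQ)$, which is an isomorphism by the hard Lefschetz theorem. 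Lifting this cohomological isomorphism, together with an algebraic inverse $\Lambda$ of it provided by $B(X)$, to the $\mathrm{gr}^2_F$ Chow groups via finite--dimensionality — the very mechanism that drives theorem \ref{main} — would give that $\cdot L\colon A^2_{AJ}(X)_{\QQ}\to A^3_{AJ}(X)_{\QQ}$ is an isomorphism. The parenthetical statements then follow by taking $L=\mathcal O_X(Y)$ for a smooth ample hypersurface $Y\subset X$ and using the factorization $\cdot L=\iota_\ast\iota^\ast$ with $\iota\colon Y\hookrightarrow X$: injectivity of $\cdot L$ on $A^2_{AJ}(X)_{\QQ}$ yields injectivity of $\iota^\ast\colon A^2_{AJ}(X)_{\QQ}\to A^2(Y)_{\QQ}$, and surjectivity of $\cdot L$ on $A^2(X)_{\QQ}$ yields surjectivity of $\iota_\ast\colon A^2(Y)_{\QQ}\to A^3(X)_{\QQ}$.

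The hard part is precisely this last transfer: passing from the hard Lefschetz isomorphism $H^2(X,\QQ)\cong H^4(X,\QQ)$ to an isomorphism of the (infinite--dimensional) Abel--Jacobi trivial Chow groups. But once hypothesis (iii) has pinned both Chow groups down to the ``$H^2$'' and ``$H^4$'' graded pieces of $F^\bullet$, this transfer is exactly the content of theorem \ref{main}, so the corollary really comes down to checking the hypotheses and unwinding the statement; I expect the write-up to be short.
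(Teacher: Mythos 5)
Your reduction to theorem \ref{main} with $(n,r)=(3,1)$ and your verification of hypotheses (\rom1)--(\rom3) match the paper: $H^3=N^1H^3$ does come from the Bloch--Srinivas decomposition \cite{BS}, and finite--dimensionality plus $B(X)$ are available (the paper obtains $B(X)$ from the support hypothesis on $A^3(X)_{\QQ}$ via \cite{A2} or \cite[Theorem 7.1]{V}; your route through domination by a product of curves is also defensible). The genuine gap is in how you dispose of the Voisin standard conjecture. You assert that it ``holds for $X$ \dots via the classical standard conjectures'' because the motive is finite--dimensional of abelian type; that is not a theorem. Conjecture \ref{csv} concerns the coniveau filtration on the homology of arbitrary (possibly singular) closed subsets and their complements, and Voisin's equivalence \cite[Proposition 1.6]{V0} trades it against the Lefschetz standard conjecture for \emph{all} varieties, not against $B(X)$ for the single variety at hand. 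Knowing $B(X)$, or that $h(X)$ lies in the subcategory generated by curves, does not give the exactness of $N^i H_{2i}(Y)\to N^i H_{2i}(X\times X)\to N^i H_{2i}(U)\to 0$ that the proof of theorem \ref{main} actually invokes. The paper's fix is different and elementary: for $(n,r)=(3,1)$ the conjecture is used only to realize the single K\"unneth component $\pi_3$ by a cycle supported on $Z\times Z$ with $\dim Z=2$; the relevant Hodge class lives in $H_6(Z\times Z)$, i.e.\ in $H^2$ of a resolution of the fourfold $Z\times Z$, where the Lefschetz $(1,1)$ theorem applies. You should replace your appeal to abelian type by this observation.

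A second, more minor point: the ``upgrade to an isomorphism'' via a Bloch--Beilinson filtration whose graded pieces are ``governed by'' $H^2$ and $H^4$ is both unnecessary and not rigorous --- finite--dimensionality alone does not furnish a filtration with those properties. Theorem \ref{main} as stated in the body of the paper already asserts surjectivity of $\cdot L^r$ on $A^j_{AJ}(X)_{\QQ}$ for $j>n-2r$, which for $(n,r,j)=(3,1,2)$ gives surjectivity of $A^2_{AJ}(X)_{\QQ}\to A^3_{AJ}(X)_{\QQ}$ directly; combined with the injectivity statement for $j\le r+1=2$, this is the asserted isomorphism with no further argument. Your derivation of the parenthetical statements from the factorization $\cdot L=\iota_\ast\iota^\ast$ is correct.
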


\begin{nonumberingc}[=corollary \ref{nocsv2}]  Let $X$ be a smooth projective variety of dimension $n$ which is a product
   \[ X= X_1\times X_2\times \cdots \times X_s\ ,\]
   where each $X_j$ is either an abelian variety, or a variety with Abel-Jacobi trivial Chow groups.
   Suppose
    $ H^i(X,\QQ)_{}= N^r H^i(X,\QQ)$ for all $i\in [n-r+1,n]$.
Then for any ample line bundle $L$ on $X$, 
\[\begin{split}
 \cdot L^r\colon&\ \ A^j_{AJ}(X)_{\QQ}\ \to\ A^{j+r}_{AJ}(X)_{\QQ}\ \\
   %\cdot L^r\colon&\ \ \grif^jX_{\QQ}\ \to\ \grif^{j+r}X_{\QQ} \\
   \end{split}   \]
is injective for $j\le r+1$, and
 \[ 
 \begin{array}[c]{cc}
 \cdot L^r\colon&\ \ A^j(X)_{\QQ}\ \to\ A^{j+r}(X)_{\QQ}\ \\
 %\cdot L^r\colon&\ \ A^j_{AJ}X_{\QQ}\ \to\ A^{j+r}_{AJ}X_{\QQ}\ ,\\
 %\cdot L^r\colon&\ \ \grif^{j-1}X_{\QQ}\ \to\ \grif^{j+r-1}X_{\QQ}\\
      \end{array}\]
 is surjective for $j>n-2r$.
 \end{nonumberingc}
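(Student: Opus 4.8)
The plan is to derive the statement from Theorem~\ref{main} by checking its hypotheses for the product $X=X_1\times\cdots\times X_s$. Hypothesis~(\rom3) is exactly the assumption made in the corollary, so what remains is to verify, for $X$, that the motive of $X$ is finite--dimensional (which secures hypothesis~(\rom1)), that the Lefschetz standard conjecture $B(X)$ holds (hypothesis~(\rom2)), and that the Voisin standard conjecture (conjecture~\ref{csv}) holds for $X$. Granting these three facts, Theorem~\ref{main} applies directly and yields both assertions for every ample $L$.

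For the finite--dimensionality I would treat the two kinds of factors separately. An abelian variety has finite--dimensional motive by the theorems of Shermenev and Kimura \cite{Kim3}, together with the Chow--Künneth decomposition of Deninger--Murre. A smooth projective variety $Y$ with $A^i_{AJ}(Y)_\QQ=0$ for all $i$ also has finite--dimensional motive: the vanishing of all the Abel--Jacobi--trivial Chow groups forces, via the Bloch--Srinivas decomposition of the diagonal \cite{BS} and Vial's analysis of the niveau filtration, a Chow--Künneth decomposition of $h(Y)$ into Tate twists of the unit motive and of the motives $h^1(A)$ of abelian varieties $A$; hence $h(Y)$ is of abelian type, and so finite--dimensional \cite{Kim3} (this is among the classes recalled in section~2). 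Since a tensor product of finite--dimensional motives is again finite--dimensional \cite{Kim3}, the motive $h(X)=h(X_1)\otimes\cdots\otimes h(X_s)$ is finite--dimensional, which gives hypothesis~(\rom1). It is worth noting that it is this alternative in~(\rom1), and not the vanishing of $\grif^n(X\times X)_\QQ$, that one uses here: as soon as some $X_j$ is an abelian variety of dimension $\ge 2$, the self--product $X\times X$ has non--zero Griffiths group.

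For hypothesis~(\rom2), the Lefschetz standard conjecture $B$ holds for abelian varieties by Lieberman's theorem, and for the factors $X_j$ with Abel--Jacobi--trivial Chow groups because their motive is of abelian type; since $B$ is stable under products (Kleiman), $B(X)$ follows. In the same way the Voisin standard conjecture is known for abelian varieties and, the relevant cohomology being of Tate and abelian type, for varieties with Abel--Jacobi--trivial Chow groups, and one checks that it is inherited by a product, so it holds for $X$. With hypotheses~(\rom1), (\rom2), (\rom3) and the Voisin standard conjecture all verified for $X$, Theorem~\ref{main} gives both conclusions.

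The step I expect to be the main obstacle is not the classical input for abelian varieties but the handling of a factor $Y$ with Abel--Jacobi--trivial Chow groups: making unconditional and precise, first, that the vanishing of \emph{all} the Abel--Jacobi--trivial Chow groups genuinely forces $h(Y)$ to be finite--dimensional of abelian type, which has to be extracted carefully from \cite{BS} together with Vial's niveau--filtration machinery; and second, that the Lefschetz and Voisin standard conjectures hold for such a $Y$ and then propagate up the product $X_1\times\cdots\times X_s$. The product--stability of the Voisin standard conjecture is the most delicate point, since that conjecture asserts the existence of algebraic cycles on $X\times X$ obeying a supplementary coniveau (support) condition, and one must manufacture those cycles from the corresponding cycles on the factors by means of Künneth products together with hypothesis~(\rom3); I would carry out this construction directly, along the lines of the proof of Theorem~\ref{main} itself.
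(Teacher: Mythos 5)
Your overall strategy --- reduce to Theorem \ref{main} by verifying its hypotheses for the product --- is the right one, and your verification of finite--dimensionality and of $B(X)$ matches the paper's (which cites \cite{K0}, \cite{K} for abelian varieties and \cite{A2}, \cite[Theorem 7.1]{V} for the AJ--trivial factors). The gap is in your treatment of the Voisin standard conjecture. You assert that it ``is known for abelian varieties'' and that ``one checks that it is inherited by a product''; neither claim is available. Conjecture \ref{csv} is essentially a consequence of the (homological) Hodge conjecture, which is open for abelian varieties (Weil classes on abelian fourfolds being the standard obstruction), and the conjecture is quantified over \emph{all} closed subsets $Y\subset X$, so there is no formal mechanism by which it would pass to products. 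As written, your second paragraph does not establish the hypothesis you need, and Theorem \ref{main} cannot be invoked verbatim.

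You do flag this as the delicate point and propose to manufacture the needed cycles directly, which is indeed what the paper does --- but the construction is not a formality and you have not supplied it. The paper's observation is that Conjecture \ref{csv} enters the proof of Theorem \ref{main} only to produce, for $i\in[n-r+1,n+r-1]$, cycles $P_i'\in A_n(Z\times Z)_{\QQ}$ pushing forward to the K\"unneth components $\pi_i$, with $Z\subset X$ of codimension $r$. For the product one writes $\pi_i$ as a sum of exterior products $\pi_{i_1}^1\times\cdots\times\pi_{i_s}^s$ of K\"unneth components of the factors; hypothesis (\rom3) places the relevant summands in $N^rH^{2n-i}X\otimes N^rH^iX\subset F^rH^{2n-i}X\otimes F^rH^iX$, and multiplicativity of the Hodge filtration distributes the Hodge coniveau over the factors. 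One then needs the key Lemma \ref{ab}: for an abelian factor the bidegrees force $(r_j,s_j)$ to be $(n_j-i_j,0)$ or $(0,i_j-n_j)$, so complete intersections together with $B(X_j)$ suffice; for an AJ--trivial factor one invokes the \emph{generalized} Hodge conjecture, known for such varieties by \cite{moi}, together with the algebraicity of Hodge classes on the resulting $V\times W$ of dimension $\le n_j+1$. Without this reduction and these two inputs, the construction you defer to ``along the lines of the proof of Theorem \ref{main}'' does not go through.
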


As noted by the anonymous referee, there is some overlap with Vial's work \cite{V4}, and corollary \ref{nocsv} easily follows from results contained in \cite{V4}. Actually, using Vial's work one can prove a stronger statement; this is explained in an appendix. We are very grateful to the referee for numerous valuable suggestions, and particularly for pointing out the relevance of \cite{V4} and sketching the proof presented in the appendix.

\begin{convention} In this note, the word {\sl variety\/} will refer to a quasi--projective irreducible algebraic variety over $\C$. A {\sl subvariety\/} is a (possibly reducible) reduced subscheme which is equidimensional. The Chow group of $j$--dimensional cycles on $X$ is denoted $A_jX$; for $X$ smooth of dimension $n$ the notations $A_jX$ and $A^{n-j}X$ will be used interchangeably. The Griffiths group $\grif^j$ is the group of codimension $j$ cycles that are homologically trivial modulo algebraic equivalence. In an effort to lighten notation, we will often write $H^jX$ or $H_jX$ to designate singular cohomology $H^j(X,\QQ)$ resp. Borel--Moore homology $H_j(X,\QQ)$.
\end{convention}

\section{Preliminary}

\begin{definition}[Coniveau filtration \cite{BO}]\label{con} Let $X$ be a quasi--projective variety. The {\em coniveau filtration\/} on cohomology and on homology is defined as
  \[\begin{split}   N^c H^i(X,\QQ)&= \sum \ima\bigl( H^i_Y(X,\QQ)\to H^i(X,\QQ)\bigr)\ ;\\
                           N^c H_i(X,\QQ)&=\sum \ima \bigl( H_i(Z,\QQ)\to H_i(X,\QQ)\bigr)\ ,\\
                           \end{split}\]
   where $Y$ runs over codimension $\ge c$ subvarieties of $X$, and $Z$ over dimension $\le i-c$ subvarieties.
 \end{definition}

We recall the statement of the ``Voisin standard conjecture'' (this is \cite[Conjecture 0.6]{V0}):

\begin{conjecture}[Voisin standard conjecture]\label{csv} Let $X$ be a smooth projective variety, and $Y\subset X$ closed with complement $U$. Then the natural sequence
  \[  N^i H_{2i}(Y,\QQ)\to N^i H_{2i}(X,\QQ)\to N^i H_{2i}(U,\QQ)\to 0\]
  is exact for any $i$.
\end{conjecture}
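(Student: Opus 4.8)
The plan is to play off the two compatible localization sequences---for Chow groups and for Borel--Moore homology---against each other, after identifying the coniveau pieces $N^i H_{2i}$ with images of cycle class maps. First I would record, for the closed immersion $\iota\colon Y\hookrightarrow X$ with open complement $j\colon U\hookrightarrow X$, the right--exact Chow localization sequence
\[ A_i(Y)_{\QQ}\xrightarrow{\iota_\ast} A_i(X)_{\QQ}\xrightarrow{j^\ast} A_i(U)_{\QQ}\to 0\ ,\]
together with the long exact Borel--Moore localization sequence
\[ \cdots\to H_{2i}(Y)\xrightarrow{\iota_\ast} H_{2i}(X)\xrightarrow{j^\ast} H_{2i}(U)\xrightarrow{\partial} H_{2i-1}(Y)\to\cdots\ .\]
The cycle class maps $cl_Y,cl_X,cl_U$ are compatible with $\iota_\ast$ and $j^\ast$, so they furnish a morphism from the first sequence into the second. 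The crucial observation is that, straight from Definition \ref{con}, one has $N^i H_{2i}(-)=\ima\bigl(cl_{(-)}\bigr)$: a subvariety $Z$ of dimension $\le i$ contributes to degree $2i$ only through the top Borel--Moore homology of its $i$--dimensional components, which is spanned by fundamental classes of cycles. The sequence to be proved is therefore exactly $\ima(cl_Y)\to\ima(cl_X)\to\ima(cl_U)\to 0$.

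I would then dispatch the two ``easy'' assertions unconditionally. That the composite $\ima(cl_Y)\to\ima(cl_U)$ vanishes is immediate from exactness of the Borel--Moore sequence. For surjectivity of $\ima(cl_X)\to\ima(cl_U)$, take a generator $cl_U(w)$ with $w\in A_i(U)_{\QQ}$; right--exactness of the Chow sequence yields $v\in A_i(X)_{\QQ}$ with $j^\ast v=w$, whence $j^\ast cl_X(v)=cl_U(w)$. Concretely this is nothing but the statement that every $i$--dimensional subvariety of $U$ extends to its Zariski closure in $X$.

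The real issue is exactness in the middle. Let $v\in A_i(X)_{\QQ}$ with $j^\ast cl_X(v)=0$. Since $cl_U(j^\ast v)=j^\ast cl_X(v)=0$, the class $cl_X(v)$ lies in $\ker\bigl(j^\ast\colon H_{2i}(X)\to H_{2i}(U)\bigr)=\ima(\iota_\ast)$, so $cl_X(v)=\iota_\ast\beta$ for some $\beta\in H_{2i}(Y)$. If one knew that $\beta\in N^i H_{2i}(Y)=\ima(cl_Y)$, say $\beta=cl_Y(u)$, then $cl_X(v)=\iota_\ast cl_Y(u)$ would lie in the image of $\ima(cl_Y)$ and we would be finished. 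The entire difficulty is thus concentrated in the following \emph{algebraicity} statement: if $\beta\in H_{2i}(Y)$ is such that $\iota_\ast\beta$ is an algebraic class on $X$, then $\beta$ is itself algebraic on $Y$ modulo the indeterminacy $\ker(\iota_\ast)=\ima\bigl(\partial\colon H_{2i+1}(U)\to H_{2i}(Y)\bigr)$.

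This last step I expect to be the main obstacle, and it is in fact the substantive content of the conjecture: it cannot be settled unconditionally, precisely because $\iota_\ast$ need not be injective and algebraicity of $\iota_\ast\beta$ need not descend to $\beta$. I would attack it by first reducing, via resolution of singularities and a descending induction over a stratification of $Y$, to the case where $Y$ is smooth, where the coniveau filtration is controlled by Bloch--Ogus theory \cite{BO} and the Gersten resolution. In that case the natural route is to manufacture algebraic self--correspondences of $X$ and of $Y$ realizing the projectors onto the coniveau pieces $N^i H_{2i}$, compatibly with $\iota_\ast$, $j^\ast$ and $\partial$; applying such a projector to the Borel--Moore localization sequence would transport its exactness verbatim to the coniveau subspaces. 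The existence of these algebraic projectors is exactly what the Lefschetz standard conjecture supplies, which is why---following Voisin \cite{V0}---the statement is of the same depth as, and is expected to follow from, the standard conjectures. A Hodge--theoretic variant would instead invoke strictness of morphisms of mixed Hodge structures together with the Hodge conjecture on $Y$ to establish the displayed algebraicity claim; either way, this is the point at which an unconditional argument breaks down.
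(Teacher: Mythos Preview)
The statement you were asked to prove is labelled \emph{Conjecture} in the paper, and the paper does not supply a proof; it only records (in the remarks immediately following) that the statement would follow from the Hodge conjecture for singular varieties, that together with the K\"unneth standard conjecture it is equivalent to the Lefschetz standard conjecture, and that it holds unconditionally for $i\le 1$ and for $i\ge\dim Y-1$. So there is no ``paper's own proof'' to compare against.

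Your proposal is not a proof either, and you are right that it cannot be. Your reduction is accurate: the identification $N^i H_{2i}(-)=\ima(cl_{(-)})$ is correct for each of $Y$, $X$, $U$ (top Borel--Moore homology of an $i$--dimensional scheme is spanned by fundamental classes of its components), and with that in hand the surjectivity of $N^i H_{2i}(X)\to N^i H_{2i}(U)$ and the vanishing of the composite are indeed unconditional, exactly as you argue via the Chow and Borel--Moore localization sequences. You then correctly isolate middle exactness as the substantive content: given $\beta\in H_{2i}(Y)$ with $\iota_\ast\beta$ algebraic on $X$, one must produce an algebraic preimage on $Y$. This is precisely where the paper invokes the Hodge conjecture in its remark: strictness of morphisms of mixed Hodge structures gives the exact sequence with $N^i$ replaced by the Hodge--theoretic analogue $\gr^W_{-2i}\cap F^{-i}$, and the Hodge conjecture on $Y$ converts that into the desired algebraic class.

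Two small comments on your write--up. First, your proposed reduction ``to the case where $Y$ is smooth'' via resolution is more delicate than you suggest: the conjecture concerns an arbitrary closed $Y$, and replacing $Y$ by a resolution $\wt Y$ changes $H_{2i}(Y)$; one would rather stratify $Y$ and invoke the Hodge conjecture on the smooth strata, which is what the paper's remark about the homological Hodge conjecture for singular varieties is pointing to. Second, your suggestion to build algebraic projectors onto $N^i H_{2i}$ compatible with the localization sequence is morally the Voisin equivalence \cite[Proposition~1.6]{V0} between this conjecture (plus K\"unneth) and $B(X)$, so you have located the right circle of ideas, but that route also does not yield an unconditional argument. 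In short: your analysis is sound, and it matches the paper's own assessment that the statement is genuinely conjectural.
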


\begin{remark} Hodge theory gives an exact sequence
  \[    \gr^W_{-2i} H_{2i}Y\cap F^{-i}\to H_{2i}X\cap F^{-i}\to \gr^W_{-2i} H_{2i}U\cap F^{-i}\to 0\ ,\]
  where $W$ denotes Deligne's weight filtration, and $F$ the Hodge filtration on $H_\ast(-,\C)$.
  Hence if the Hodge conjecture (that is, its homology version for singular varieties \cite{J}) is true, then conjecture \ref{csv} is true. 
  
What's more, this conjecture fits in very neatly with the classical standard conjectures: Voisin shows that conjecture \ref{csv} plus the algebraicity of the K\"unneth components of the diagonal is equivalent to the Lefschetz standard conjecture \cite[Proposition 1.6]{V0}.
 \end{remark}
  
\begin{remark}\label{csvtrue} Conjecture \ref{csv} is obviously true for $i\le 1$ (this follows from the truth of the Hodge conjecture for curve classes), and for $i\ge \dim Y-1$ (where it follows from the Hodge conjecture for divisors).
\end{remark} 

The main ingredient we will use in this note is Kimura's nilpotence theorem:

\begin{theorem}[Kimura \cite{Kim3}]\label{nilp} Let $X$ be a smooth projective variety of dimension $n$ with finite--dimensional motive. Let $\Gamma\in A^n(X\times X)_{\QQ}$ be a correspondence which is homologically trivial. Then there is $N\in\NN$ such that
     \[ \Gamma^{\circ N}=0\ \ \ \ \in A^n(X\times X)_{\QQ}\ .\]
\end{theorem}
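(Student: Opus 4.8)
The plan is to prove this within the formalism of finite--dimensional motives, viewing $\Gamma$ as an endomorphism of the Chow motive $M=\mathfrak{h}(X)=(X,\Delta)$ in the rigid $\QQ$--linear tensor category of Chow motives, where $\mathrm{End}(M)=A^n(X\times X)_{\QQ}$ with composition given by correspondence composition $\circ$. The engine is the categorical trace $\mathrm{tr}\colon \mathrm{End}(M)\to \mathrm{End}(\mathbf{1})=\QQ$ afforded by rigidity. First I would record the one geometric input: for a correspondence $f\in \mathrm{End}(M)$ the trace $\mathrm{tr}(f)$ is an intersection number that factors through the cohomology class of $f$ (it equals the Lefschetz number $\sum_i(-1)^i\mathrm{tr}(f^\ast\mid H^i(X))$); hence $\mathrm{tr}(f)=0$ whenever $f$ is homologically trivial. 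Writing the finite--dimensional $M$ as $M=M^+\oplus M^-$ with $M^+$ evenly and $M^-$ oddly finite--dimensional, I would reduce the statement to a nilpotence assertion for each summand, plus a bookkeeping argument for the off--diagonal blocks.

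For the evenly finite--dimensional summand, say $\wedge^{m+1}M^+=0$, the key is the categorical Cayley--Hamilton identity valid in any $\QQ$--linear rigid tensor category: for $f\in \mathrm{End}(M^+)$ one has
\[ \sum_{i=0}^{m}(-1)^i\,\lambda_i(f)\,f^{\circ(m-i)}=0\ ,\qquad \lambda_i(f)=\mathrm{tr}\bigl(\wedge^i f\bigr)\in\QQ\ , \]
with $\lambda_0(f)=1$. Now if $f$ is homologically trivial, then so is $f^{\otimes i}$ (its action on $H^\ast(X^i)=H^\ast(X)^{\otimes i}$ is $(f^\ast)^{\otimes i}=0$), and therefore so is $\wedge^i f=a_i\circ f^{\otimes i}\circ a_i$ for the antisymmetrizer $a_i$, because homologically trivial correspondences form a two--sided ideal. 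Combining with the geometric input gives $\lambda_i(f)=\mathrm{tr}(\wedge^i f)=0$ for all $i\ge 1$, so Cayley--Hamilton collapses to $f^{\circ m}=0$. The oddly finite--dimensional summand $M^-$, with $\mathrm{Sym}^{m'+1}M^-=0$, is treated symmetrically: here the roles of $\wedge$ and $\mathrm{Sym}$ are interchanged, the analogous identity has coefficients $\mathrm{tr}(\mathrm{Sym}^i f)$, which again vanish for homologically trivial $f$, yielding $f^{\circ m'}=0$ on $M^-$.

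It then remains to pass from the two diagonal blocks to the whole endomorphism $f$. I would write $f$ in block form with respect to $M=M^+\oplus M^-$; the diagonal entries $f^{++},f^{--}$ are homologically trivial, hence nilpotent by the previous paragraph, while the off--diagonal entries $f^{+-},f^{-+}$ are automatically homologically trivial, since in the semisimple category of numerical motives there are no nonzero maps between an evenly and an oddly finite--dimensional object. Consequently every block lies in the two--sided ideal of homologically trivial correspondences, the composites $f^{+-}\circ f^{-+}$ and $f^{-+}\circ f^{+-}$ are again homologically trivial endomorphisms of $M^+$ resp. $M^-$ and hence nilpotent, and a direct estimate on the length of an arbitrary word in the four blocks shows $\Gamma^{\circ N}=f^{\circ N}=0$ for $N$ bounded in terms of $\dim^+M+\dim^-M$.

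The main obstacle is the second paragraph: establishing (or citing in precisely the needed form) the categorical Cayley--Hamilton identity, together with the bookkeeping that exterior and symmetric powers of a homologically trivial correspondence remain homologically trivial. Verifying that $\mathrm{Hom}$ vanishes between the even and odd parts in the numerical category---so that the off--diagonal blocks cost nothing---is the other point requiring care, as is turning the nilpotence of the individual blocks into an explicit bound $N$ for the full correspondence.
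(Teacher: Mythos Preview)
The paper does not give its own proof of this statement; it is quoted as a preliminary result with a reference to Kimura. Your sketch is indeed along the lines of Kimura's original argument (Cayley--Hamilton on the even and odd summands, then reassemble), so there is nothing to compare on the level of strategy. There is, however, a genuine gap in your final step.

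Your ``direct estimate on the length of an arbitrary word in the four blocks'' does not follow from the ingredients you have isolated. You only record that $a:=f^{++}$, $d:=f^{--}$, $bc:=f^{+-}f^{-+}$ and $cb:=f^{-+}f^{+-}$ are nilpotent. That alone is \emph{not} enough to force the block matrix $\bigl(\begin{smallmatrix}a&b\\ c&d\end{smallmatrix}\bigr)$ to be nilpotent: over $\QQ$, take
\[
a=d=\begin{pmatrix}0&1\\0&0\end{pmatrix},\qquad b=\begin{pmatrix}1&0\\0&0\end{pmatrix},\qquad c=\begin{pmatrix}0&0\\1&0\end{pmatrix}.
\]
Then $a^2=d^2=bc=0$ and $(cb)^2=0$, yet the resulting $4\times 4$ matrix $f$ has $\mathrm{tr}(f^3)=2$, hence is not nilpotent. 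What goes wrong is that a word contributing to $(f^N)^{++}$ has the shape $a^{i_r}(bd^{j_r}c)a^{i_{r-1}}\cdots(bd^{j_1}c)a^{i_0}$, and the factors $a$ and $bd^{j}c$ are different, non--commuting endomorphisms of $M^+$; knowing each one is nilpotent says nothing about their products. What is actually needed is that the whole \emph{ideal} $J^+=\ker\bigl(\mathrm{End}(M^+)\to \mathrm{End}_{\mathrm{hom}}(M^+)\bigr)$ is nilpotent. You do have the key input for this --- Cayley--Hamilton gives $g^{d^+}=0$ for every $g\in J^+$, not just for $g=a$ --- but the passage from ``every element is nilpotent of index $\le d^+$'' to ``$(J^+)^{N}=0$'' is the Nagata--Higman theorem (or, equivalently, a polarized multilinear Cayley--Hamilton as in Andr\'e--Kahn). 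Once you invoke that, the word estimate becomes honest: any such word with at least $N^+$ factors in $J^+$ vanishes, and similarly on the odd side, which bounds $N$.

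A smaller point: your reason for the off--diagonal blocks $f^{+-},f^{-+}$ being homologically trivial (``no maps between even and odd in the numerical category'') is an unnecessary detour. Since $f$ itself lies in the two--sided ideal of homologically trivial correspondences, so does $p^{+}fp^{-}$ and $p^{-}fp^{+}$; that is all you need.
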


 We refer to \cite{Kim3}, \cite{An}, \cite{MNP} for the definition of finite--dimensional motive. Conjecturally, any variety has finite--dimensional motive \cite{Kim3}. What mainly concerns us in the scope of this note, is that there are quite a few examples which are known to have finite--dimensional motive:
varieties dominated by products of curves \cite{Kim3}, $K3$ surfaces with Picard number $19$ or $20$ \cite{P}, surfaces not of general type with vanishing geometric genus \cite[Theorem 2.11]{GP}, Godeaux surfaces \cite{GP}, 3folds with nef tangent bundle \cite{I}, certain 3folds of general type \cite[Section 8]{Vial}, varieties of dimension $\le 3$ rationally dominated by products of curves \cite[Example 3.15]{V3}, varieties $X$ with Abel--Jacobi trivial Chow groups (i.e. $A^i_{AJ}X_{\QQ}=0$ for all $i$) \cite[Theorem 4]{V2}, products of varieties with finite--dimensional motive \cite{Kim3}.

So far, all examples of finite-dimensional motives are in the tensor subcategory generated by Chow motives of curves.

There exists another nilpotence result, which predates and prefigures Kimura's theorem:

\begin{theorem}[Voisin \cite{V9}, Voevodsky \cite{Voe}]\label{VV} Let $X$ be a smooth projective algebraic variety of dimension $n$, and $\Gamma\in A^n(X\times X)_{\QQ}$ a correspondence which is algebraically trivial. Then there is $N\in\NN$ such that
     \[ \Gamma^{\circ N}=0\ \ \ \ \in A^n(X\times X)_{\QQ}\ .\]
    \end{theorem}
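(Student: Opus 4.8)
The final statement as worded is Theorem \ref{VV}, the nilpotence theorem for algebraically trivial correspondences due to Voisin and Voevodsky. I will sketch a proof of this.

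\medskip

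The plan is to reduce the nilpotence claim for an algebraically trivial self-correspondence $\Gamma\in A^n(X\times X)_{\QQ}$ to the case where $\Gamma$ is parametrized by a smooth projective curve, and then to exploit the structure of the motive of a curve. First I would recall that a cycle $\Gamma$ is algebraically trivial precisely when there exist a smooth projective connected curve $C$, two points $c_0,c_1\in C$, and a cycle $Z\in A^n(C\times X\times X)_{\QQ}$ such that $\Gamma = Z_{c_1}-Z_{c_0}$, where $Z_c$ denotes the fibre of $Z$ over $c\in C$. (Any algebraically trivial cycle is a $\QQ$-combination of such ``curve-differences,'' and since composition distributes over sums it suffices to treat a single such $\Gamma$, absorbing the combinatorics into the final nilpotence exponent.) Thus the entire correspondence is governed by the family $Z$ over the curve $C$.

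\medskip

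The key step is to interpret the self-composition $\Gamma^{\circ N}$ in terms of the family. Composing the differences $Z_{c_1}-Z_{c_0}$ with themselves $N$ times produces, after the projection formula, a cycle that is pushed forward from a product $C^{\times N}$ (or a suitable correspondence on $C^N\times X\times X$) and then evaluated at the differences of points. The crucial input is that the motive $\mathfrak{h}(C)$ of a smooth projective curve is finite-dimensional in Kimura's sense: its odd part $\mathfrak{h}^1(C)$ is an \emph{oddly} finite-dimensional (``odd'') object, meaning some exterior power $\wedge^{m}\mathfrak{h}^1(C)$ vanishes. Concretely, the information of ``difference of two fibres'' lives in the $\mathfrak{h}^1(C)$-part, and taking an $N$-fold self-composition forces the relevant class into $\wedge^{N}$ of this odd piece (or into a quotient thereof governed by the Schur functor $S_{(1^N)}$). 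Once $N$ exceeds the odd dimension of $\mathfrak{h}^1(C)$, this exterior power is $0$, which forces $\Gamma^{\circ N}=0$ in $A^n(X\times X)_{\QQ}$.

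\medskip

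Concretely, I would set up the argument as follows. Since $\Gamma$ is algebraically trivial, it is in particular homologically trivial, so $Z_{c_1}$ and $Z_{c_0}$ are homologous; writing $\delta = [c_1]-[c_0]\in A^1(C)_{\QQ}$, this $\delta$ is a degree-zero $0$-cycle on $C$, hence lies in $A^1_{AJ}(C)_{\QQ}=\mathrm{Pic}^0(C)_{\QQ}$ and corresponds to an element of the ``$\mathfrak{h}^1$'' part under the refined Chow--Künneth decomposition $\mathfrak{h}(C)=\mathbf{1}\oplus\mathfrak{h}^1(C)\oplus\mathbb{L}$ of a curve. I would then express $\Gamma$ as the action on correspondences of a cycle factoring through $\mathfrak{h}^1(C)$, so that $\Gamma^{\circ N}$ factors through $\mathfrak{h}^1(C)^{\otimes N}$, and more precisely through its alternating (antisymmetric) part because of the sign coming from composing skew-symmetric Poincaré-type pairings on the curve. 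The finite-dimensionality of curve motives, which is elementary and unconditional (it is the one case of Kimura finite-dimensionality that is classically known, via Jacobians and the theorem that $\wedge^{m}\mathfrak{h}^1(C)=0$ for $m>2\,\mathrm{genus}(C)=\dim H^1(C)$), then yields the vanishing.

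\medskip

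The main obstacle, and the step requiring the most care, is making rigorous the passage ``$\Gamma^{\circ N}$ factors through the antisymmetric part of $\mathfrak{h}^1(C)^{\otimes N}$.'' One must check that the composition of correspondences really does introduce the antisymmetrization (rather than the full tensor power), since only the antisymmetric part is killed by finite-dimensionality of the odd motive. This is precisely where the geometry of the curve enters: the intersection pairing $H^1(C)\times H^1(C)\to\QQ$ is \emph{alternating}, and this skew-symmetry must be tracked through the cycle-level composition to produce the sign that forces $\Gamma^{\circ N}$ into $S_{(1^N)}$ applied to $\mathfrak{h}^1(C)$. Voisin's and Voevodsky's original arguments do this somewhat differently (Voisin via a degeneration/monodromy argument for the family over $C^N$, Voevodsky via his triangulated category of motives), but in either formulation the heart of the matter is the same: the odd cohomology of a curve is finite-dimensional and carries an alternating pairing, and an algebraically trivial correspondence lives entirely in this odd part, so iterated composition exhausts the available antisymmetric space after boundedly many steps.
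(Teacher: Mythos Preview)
The paper does not prove Theorem~\ref{VV}; it is stated with attribution to Voisin \cite{V9} and Voevodsky \cite{Voe} and then used as a black box in the proof of Theorem~\ref{main}. So there is no ``paper's own proof'' to compare against, and your sketch is an attempt to reconstruct the original argument.

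Your overall strategy --- reduce to a family over a curve and exploit the finite-dimensionality of the motive of a curve --- is indeed the backbone of the Voisin/Voevodsky proofs. However, the specific mechanism you propose is not correct. You claim that iterated \emph{composition} $\Gamma^{\circ N}$ naturally factors through the antisymmetric part $\wedge^N\mathfrak{h}^1(C)$, with the sign coming from the alternating cup-product on $H^1(C)$. There is no such factorization: composition of correspondences does not interact with the curve parameter in a way that produces an antisymmetrizer, and the cup-product on $H^1(C)$ plays no role here. You yourself flag this step as ``the main obstacle,'' and rightly so --- it does not go through as stated.

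The actual argument proceeds via \emph{smash-nilpotence} rather than directly via composition. One first shows that for $\delta=[c_1]-[c_0]\in A_0(C)_{\QQ}$ of degree zero, the external power $\delta^{\times N}\in A_0(C^N)_{\QQ}$ vanishes for $N$ large; this uses the geometry of the Abel--Jacobi map $C^N\to J(C)$ (Voisin) or, in motivic language, the odd finite-dimensionality of $\mathfrak{h}^1(C)$, which in Kimura's conventions says that a certain \emph{symmetric} power of $\mathfrak{h}^1(C)$ vanishes. From $\delta^{\times N}=0$ one deduces that the external product $\Gamma^{\times N}=0$ in $A^{nN}\bigl((X\times X)^N\bigr)_{\QQ}$. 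Finally, a general and elementary lemma (smash-nilpotence implies composition-nilpotence for self-correspondences) yields $\Gamma^{\circ M}=0$ for some $M$. If you rewrite your sketch with the intermediate step ``$\Gamma^{\times N}=0$'' replacing the antisymmetrization claim, it becomes a correct outline.
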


\section{Main}

We proceed to prove the main result of this note. Note that we prove slightly more than the statement given in the introduction; we also consider hard Lefschetz for the Griffiths groups.

\begin{theorem}\label{main} Suppose the Voisin standard conjecture holds. Let $X$ be a smooth projective variety of dimension $n$, and suppose

\item{(\rom1)} Either the motive of $X$ is finite--dimensional, or $\grif^n(X\times X)_{\QQ}=0$;

\item{(\rom2)} The Lefschetz standard conjecture $B(X)$ holds;

\item{(\rom3)} $H^i(X,\QQ)_{}= N^r H^i(X,\QQ)$ for all $i\in [n-r+1,n]$.

Then for any ample line bundle $L$, the maps
\[\begin{split}
 \cdot L^r\colon&\ \ A^j_{AJ}(X)_{\QQ}\ \to\ A^{j+r}_{AJ}(X)_{\QQ}\ ,\\
   \cdot L^r\colon&\ \ \grif^j(X)_{\QQ}\ \to\ \grif^{j+r}(X)_{\QQ} \\
   \end{split}   \]
are injective for $j\le r+1$, and

\[ 
 \begin{array}[c]{cc}
 \cdot L^r\colon&\ \ A^j(X)_{\QQ}\ \to\ A^{j+r}(X)_{\QQ}\ ,\\
 \cdot L^r\colon&\ \ A^j_{AJ}(X)_{\QQ}\ \to\ A^{j+r}_{AJ}(X)_{\QQ}\ ,\\
 \cdot L^r\colon&\ \ \grif^{j-1}(X)_{\QQ}\ \to\ \grif^{j+r-1}(X)_{\QQ}\\
      \end{array}\]
 are surjective for $j>n-2r$.
 
 (Moreover, $L^r$ is injective on $A^j_{hom}(X)_{\QQ}$ resp. on $A^j(X)_{\QQ}$ provided $j\le\min(r+1,{n-r+1\over 2})$ resp. $j\le\min(r+1,{n-r\over 2})$.)
 
 \end{theorem}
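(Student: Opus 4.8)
The plan is to reduce everything to a statement about the Chow motive of $X$, using the classical hard Lefschetz theorem in cohomology together with a nilpotence argument. First I would invoke hypothesis (ii), the Lefschetz standard conjecture $B(X)$, to obtain algebraic correspondences realizing the Lefschetz operators: in particular, for the iterated operator $L^r$ acting on $H^{2j-2}(X)$ (or in the relevant range), $B(X)$ gives a correspondence $\Lambda\in A^{\ast}(X\times X)_{\QQ}$ such that $\Lambda$ followed by $\cdot L^r$ is the identity on cohomology in the appropriate range, and symmetrically $\cdot L^r$ followed by $\Lambda$ is the identity in the dual range. Thus the correspondence $\Gamma := \Delta_X - \Lambda\circ ({}\cdot L^r)$ (and its transpose-type variant for the surjectivity direction) is homologically trivial on $X\times X$. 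Here hypothesis (iii), $H^i(X)=N^rH^i(X)$ for $i\in[n-r+1,n]$, is exactly what is needed to make hard Lefschetz in cohomology interact correctly with the coniveau/Abel--Jacobi filtration, ensuring $\Gamma$ acts trivially not just on cohomology but compatibly with the sub-quotients $A^j_{AJ}$ and $\grif^j$; this is where the Voisin standard conjecture enters, controlling the coniveau pieces via definition \ref{con} and conjecture \ref{csv}.

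Next I would apply the nilpotence theorem. Under the first alternative of hypothesis (i) --- $X$ has finite-dimensional motive --- Kimura's theorem \ref{nilp} applies directly: a homologically trivial correspondence $\Gamma\in A^n(X\times X)_{\QQ}$ satisfies $\Gamma^{\circ N}=0$ for some $N$. Under the second alternative --- $\grif^n(X\times X)_{\QQ}=0$ --- the homologically trivial correspondence $\Gamma$ is in fact algebraically trivial (since the Griffiths group vanishes), so the Voisin--Voevodsky theorem \ref{VV} gives the same conclusion $\Gamma^{\circ N}=0$. In either case, expanding $\Delta_X = \Gamma + \Lambda\circ({}\cdot L^r)$ and raising to a high power, the nilpotence of $\Gamma$ shows that $\Delta_X$ factors (up to the relevant filtration step) through the correspondence $\cdot L^r$; equivalently $\cdot L^r$ admits a left inverse on the relevant Chow group modulo the next step of the filtration. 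Iterating down the (conjectural, but here concretely available via finite-dimensionality) filtration on $A^j_{AJ}$ and $\grif^j$ yields the injectivity statements for $j\le r+1$, and the dual argument --- using that $\cdot L^r$ has a right inverse modulo filtration --- yields the surjectivity statements for $j>n-2r$. The shift by one in the Griffiths group statements ($\grif^{j-1}\to\grif^{j+r-1}$ for surjectivity) reflects the fact that the Abel--Jacobi and algebraic-equivalence pieces sit in adjacent graded steps.

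For the parenthetical refinement --- that $\cdot L^r$ is actually injective on all of $A^j_{hom}(X)_{\QQ}$ when $j\le\min(r+1,(n-r+1)/2)$, and on all of $A^j(X)_{\QQ}$ when $j\le\min(r+1,(n-r)/2)$ --- I would combine the injectivity on $A^j_{AJ}$ and on $\grif^j$ already obtained with the injectivity of $\cdot L^r$ on the ``bottom'' piece, namely on algebraic cohomology classes. The latter follows because for $2j-2\le n-r$ (resp. $2j\le n-r$), hard Lefschetz in cohomology makes $\cdot L^r\colon H^{2j}(X)\to H^{2j+2r}(X)$ injective on the whole group, hence a fortiori on the algebraic part $A^j(X)_{\QQ}/A^j_{hom}(X)_{\QQ}$; the numerical constraint is precisely the condition that $2j$ (or $2j-2$) lies in the range $[0,n-r]$ where $\cdot L^r$ is the hard Lefschetz injection rather than the isomorphism or surjection. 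A diagram chase through the three-step filtration $A^j_{hom}\subset A^j$, with graded pieces $\grif^j$, $A^j_{AJ}$ (suitably arranged), and the algebraic classes, then upgrades injectivity on each graded piece to injectivity on the whole group.

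The main obstacle I anticipate is the second step above: bookkeeping the interaction between the hard Lefschetz isomorphism in cohomology and the coniveau filtration, i.e. verifying that hypothesis (iii) together with the Voisin standard conjecture genuinely forces the correspondence $\Gamma$ to act as zero on each graded piece of the Bloch--Beilinson-type filtration on Chow groups and not merely on total cohomology. Concretely, one must check that $N^rH^i(X)=H^i(X)$ for $i\in[n-r+1,n]$ implies the Lefschetz inverse correspondence $\Lambda$ can be chosen supported on a subvariety of the right codimension, so that $\Lambda$ preserves (rather than jumps) the coniveau and hence the induced filtration on $A^\ast$; this is exactly the role of conjecture \ref{csv}, and making this precise --- rather than the nilpotence input, which is essentially a black box --- is the technical heart of the argument.
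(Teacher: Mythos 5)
There is a genuine gap at the heart of your argument: the correspondence $\Gamma=\Delta_X-\Lambda\circ(\cdot L^r)$ is \emph{not} homologically trivial, so neither nilpotence theorem applies to it. A relation $\Lambda\circ L^r=\id$ can only hold on $H^i(X)$ for $i\le n-r$ (beyond that range $\cdot L^r\colon H^i\to H^{i+2r}$ is no longer injective, so no left inverse exists), and hence $\Gamma$ still acts as the identity minus something on $H^i(X)$ for $i>n-r$; its class in $H^{2n}(X\times X)$ is essentially $\sum_{i>n-r}\pi_i$, not zero. The paper is forced to build a \emph{complete} algebraic decomposition of the diagonal out of three different kinds of K\"unneth lifts before nilpotence can be invoked: for $i\le n-r$, lifts $\Pi_i=\pi_i\circ{}^tC_i\circ L^{n-i}$, which visibly factor through $\cdot L^r$; for $i\in[n-r+1,n+r-1]$, cycles $P_i$ supported on $Z\times Z$ with $Z\subset X$ of codimension $r$ --- this is the \emph{only} place the Voisin standard conjecture is used, namely to algebraize the middle K\"unneth components on $Z\times Z$, after which these $P_i$ kill $A^j_{AJ}$ for $j\le r+1$ and $A^j$ for $j>n-r$ for pure dimension reasons; and for $i\ge n+r$, lifts $\Pi_i=\pi_i\circ L^{i-n}\circ{}^tC_{2n-i}$, which kill $A^j_{AJ}$ for $j\le r+1$ because they factor through $A^{j+n-i}_{AJ}(X)_{\QQ}=0$. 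Only the difference between $\Delta$ and the sum of \emph{all} these pieces is homologically trivial; nilpotence then expresses $\Delta$ as a sum of compositions of the pieces, and one argues on the tail (for injectivity) or head (for surjectivity, after transposing the choice of lift for $i\ge n+r$) of each composition. Your sketch assigns hypothesis (iii) and the Voisin conjecture a vague compatibility role, whereas their actual, indispensable function is to supply the missing middle terms of the decomposition; without them there is simply no homologically trivial correspondence to feed into Kimura or Voisin--Voevodsky. Relatedly, the phrase ``iterating down the filtration on $A^j_{AJ}$'' does not correspond to anything in the actual argument: no Bloch--Beilinson-type filtration is available or used, only the single subgroups $A^j_{AJ}\subset A^j_{hom}\subset A^j$ and $\grif^j$.

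A secondary inaccuracy concerns the parenthetical refinement. For $A^j_{hom}$ the piece beyond $A^j_{AJ}$ is detected by the intermediate Jacobian $J^j(X)$, which is built from $H^{2j-1}(X)$, so the relevant hard Lefschetz injectivity is that of $\cdot L^r$ on $H^{2j-1}$, valid for $2j-1\le n-r$, i.e.\ $j\le(n-r+1)/2$; it is not injectivity on $H^{2j}$, which is what governs the $A^j$ case via the cycle class map (condition $2j\le n-r$). The paper's proof of this part is exactly the diagram chase through $A^j_{hom}(X)_{\QQ}\to J^j(X)_{\QQ}$, so your overall strategy here is right but the degrees are off by one in the $A^j_{hom}$ case.
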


\begin{proof}
We first consider Chow groups, and prove the injectivity statement. Since by hypothesis $B(X)$ holds, the K\"unneth components 
  \[ \pi_i\in \ima\Bigl( H^{2n-i}X\otimes H^iX\ \to\ H^{2n}(X\times X)\Bigr)\]
 are algebraic \cite{K}. Given an ample line bundle $L$, and an integer $\ell\ge 0$, we have a correspondence $L^\ell\in A^{n+\ell}(X\times X)_{\QQ}$ which acts as ``cupping with $L^\ell$''. There is the relation
   \[  L^\ell= \frac{1}{d}\ \Gamma_{\tau}\circ{}^t \Gamma_{\tau}\ \ \in A^{n+\ell}(X\times X)_{\QQ}\ ,\]
   where $\Gamma_\tau$ is the graph of the inclusion $\tau\colon Y\to X$ and $Y$ is a complete intersection of class $[Y]=dL^\ell\in A^\ell X_{\QQ}$.
   Moreover, since we suppose $B(X)$ holds, for any $i\le n$ there exist correspondences $C_i\in A^i(X\times X)_{\QQ}$ such that
   \[ C_i\circ L^{n-i}=\id\colon\ \ H^iX\ \to\ H^iX\ .\]
   
   Now we are going to use hypothesis (\rom3) of the theorem. Applying hard Lefschetz, it follows from hypothesis (\rom3) that there exists some closed codimension $r$ subvariety $Z\subset X$ supporting the cohomology groups 
   \[H^{n-r+1}X,\ldots,H^{n+r-1}X\ .\] 
   That is, for $i\in[n-r+1,n+r-1]$, the K\"unneth component $\pi_i$ is in the kernel of the restriction homomorphism
   \[  H^{2n}(X\times X)\ \to\ H^{2n}\bigl((X\times X)\setminus(Z\times Z)\bigr)\ .\]
   Using the Voisin standard conjecture (conjecture \ref{csv}), we find there exists a cycle $P_i^\prime\in A_n(Z\times Z)_{\QQ}$ such that the push-forward $P_i\in A^n(X\times X)$ (of $P_i^\prime$ to $X\times X$) equals the K\"unneth component $\pi_i$:
   \[  P_i=\pi_i\ \ \in H^{2n}(X\times X)\ \ \forall i\in[n-r+1,n+r-1]\ .\]

    \begin{lemma}\label{noaction} Let $i\in [n-r+1,n+r-1]$. Then for $j>n-r$, we have
       \[ (P_i)_\ast A^j(X)_{\QQ}=0\ .\]
     For $j\le r+1$, we have
       \[ (P_i)_\ast A^j_{AJ}(X)_{\QQ}=0\ .\]
     Moreover, 
       \[  (P_i)_\ast \grif^j(X)_{\QQ}=0\ \ \forall j\in [0,r+1]\cup [n-r,n]\ .\]
       
     \end{lemma}
    
  \begin{proof} 
  Let $\psi\colon Z\to X$ denote the inclusion, so $P_i=(\psi\times\psi)_\ast (P_i^\prime)$. There is a factorization
  \[\begin{array}[c]{ccc}
         A^j(X)_{\QQ}&\stackrel{(P_i)_\ast}{\to}& A^{j}(X)_{\QQ}\\
         \downarrow&&\uparrow\\
         A^{j}(Z)_{\QQ}&\stackrel{(P_i^\prime)_\ast}{\to}&A^{j-r}(Z)_{\QQ}\ .\\
         \end{array}\]
      This implies the lemma for reasons of dimension: the lower left group vanishes for $j>n-r$ (since $\dim Z=n-r$); the lower right group vanishes when restricted to Abel--Jacobi trivial cycles for $j\le r+1$. 
         \end{proof}   
   
   For $i\le n-r$, we choose a rational equivalence class to represent the K\"unneth component $\pi_i$ in the following way: 
   We take arbitrary lifts of $\pi_i$ and $C_i$ in $A^n(X\times X)_{\QQ}$ resp. in $A^i(X\times X)_{\QQ}$, and we define
   \[ \Pi_i:=\pi_i\circ {}^t C_i\circ L^{n-i}\ \ \in A^n(X\times X)_{\QQ}\ ,\ \  i\le n-r\ .\]
For $i\ge n+r$, we make the following choice to represent the K\"unneth component: We define
   \[ \Pi_i:=\pi_{i}\circ L^{i-n}\circ {}^t C_{2n-i}\ \ \in A^n(X\times X)_{\QQ}\ ,   \ \ i\ge n+r\ .\]

   \begin{lemma}\label{OK} We have
     \[ \Pi_i=\pi_i\ \ \in H^{2n}(X\times X)\ \ \forall i\in [0,n-r]\cup [n+r,2n]\ .\]
     \end{lemma}
     
  \begin{proof} First, consider the case $i\le n-r$. The transpose of $\Pi_i$ is
    \[ {}^t \Pi_i={}^t \bigl( \pi_i\circ {}^t C_i\circ L^{n-i} \bigr)= L^{n-i}\circ C_i\circ \pi_{2n-i}\ \ \in H^{2n}(X\times X)\]
  (as obviously ${}^t L^{n-i}={1/d}\ \ {}^t ( \Gamma_\tau\circ{}^t \Gamma_\tau)= L^{n-i}$). Hence, the action on cohomology is
    \[  ({}^t \Pi_i)_\ast  H^jX=\begin{cases}  \id&\hbox{if\ }j=2n-i\ ;\\
                                                                   0&\hbox{if\ }j\not=2n-i\ .\\
                                                                   \end{cases}\]
                               It follows that ${}^t \Pi_i=\pi_{2n-i}\in H^{2n}(X\times X)$.     
                               
   Next, suppose $i\ge n+r$. The argument is similar: 
                                                                 The transpose of $\Pi_i$ is
    \[ {}^t \Pi_i={}^t \bigl( \pi_i\circ L^{i-n}\circ {}^t C_{2n-i} \bigr)= C_{2n-i}\circ L^{i-n}\circ \pi_{2n-i}\ \ \in H^{2n}(X\times X)\ .\]
  Hence, the action on cohomology is
    \[  ({}^t \Pi_i)_\ast  H^jX=\begin{cases}  \id&\hbox{if\ }j=2n-i\ ;\\
                                                                   0&\hbox{if\ }j\not=2n-i\ .\\
                                                                   \end{cases}\]
                               It follows that ${}^t \Pi_i=\pi_{2n-i}\in H^{2n}(X\times X)$.                                     
    \end{proof}      

\begin{lemma}\label{noaction2} Let $i\ge n+r$. Then for $j\le r+1$, we have
  \[ \begin{split}   &(\Pi_i)_\ast A^j_{AJ}X_{\QQ}=0\ ;\\
                               &(\Pi_i)_\ast \grif^jX_{\QQ}=0\ .
                                \end{split}  \]
            \end{lemma}
            
         \begin{proof} Note that ${}^t C_{2n-i}\in A^{2n-i}(X\times X)_{\QQ}$
      acts 
        \[ ({}^t C_{2n-i})_\ast\colon\ A^j_{AJ}(X)_{\QQ}\ \to\ A^{j+n-i}_{AJ}(X)_{\QQ}\ .\]
      But since $j+n-i\le 1$, the group on the right vanishes.  
               \end{proof}  
               
           The above choices give us a decomposition of the diagonal
           \[       \Delta= \sum_{i=0}^{n-r} \Pi_i + \sum_{i={n-r+1}}^{n+r-1} P_i+\sum_{i=n+r}^{2n} \Pi_i\ \ \in H^{2n}(X\times X,\QQ)\ .\]
   This is an equality of cycles modulo homological equivalence.
   Now, applying one of the two nilpotence theorems (theorem \ref{nilp} if the motive is finite--dimensional, theorem \ref{VV} in case the Griffiths group vanishes), we get that there exists $N\in\NN$ such that
   \[  \Bigl( \Delta - \sum_{i=1}^{n-r} \Pi_i - \sum_{i={n-r+1}}^{n+r-1} P_i-\sum_{i=n+r}^{2n}  \Pi_i  \Bigr)^{\circ N}=0\ \ \in A^n(X\times X)_{\QQ}\ .\]

   Developing this expression (and noting that $\Delta^{\circ N}=\Delta$), we find 
    \[ \Delta=\sum_k Q_k\ \ \in A^n(X\times X)_{\QQ}\ ,\]
    where each $Q_k$ is a composition of elements $\Pi_i$ and $P_{i^\prime}$.
    For each $k$, let $Q_k^{0}$ denote the ``tail element'' of $Q_k$, i.e. we write
    \[ Q_k= Q_k^{N^\prime}\circ Q_k^{N^\prime -1}\circ\cdots\circ Q_k^{0}\ \ \in A^n(X\times X)_{\QQ}\ ,\]
    with $Q_k^{0}\not=\Delta$ (so that $N^\prime\le N$). 
          
 Now let us consider the action of $Q_k$ on $A^j_{AJ}(X)_{\QQ}$, for $j\le r+1$.
    In case $Q_k^{0}$ is a $\Pi_i$ for some $i\in[n+r,2n]$, it follows from lemma \ref{noaction2} that
      \[(Q_k)_\ast A^j_{AJ}(X)_{\QQ}=0\ .\] 
      Likewise, if $Q_k^0$ is of the form $P_i$ (for some $i\in[n-r+1,n+r-1]$), then applying lemma \ref{noaction}, we find again
      \[ (Q_k)_\ast A^j_{AJ}(X)_{\QQ}=0\ .\]   
      It follows that the only $Q_k$ acting non--trivially are those with a tail of type $\Pi_i$, $i\le n-r$. But then (looking at the definition of $\Pi_i$ for $i\le n-r$) it follows that
      \[ A^j_{AJ}(X)_{\QQ}=\Delta_\ast A^j_{AJ}(X)_{\QQ}=\bigl( (\hbox{something})\circ L^r \bigr)_\ast A^j_{AJ}(X)_{\QQ}\ \ \ \forall j\le r+1\ .\]
   The injectivity statement is now obvious.
   
 We now proceed to prove the surjectivity statement; this is done by making one small change in the above argument. We replace the correspondences $\Pi_i$ for $i\ge n+r$ by the following modification:
   \[ \Pi_i^\prime:= L^{i-n}\circ {}^t C_{2n-i}\circ \pi_i\ \ \in A^n(X\times X)_{\QQ}\ ,\ \ i\ge n+r\ .\]
 This definition implies the following (cf. lemma \ref{OK}):
 
 \begin{lemma} For $i\ge n+r$, we have
   \[\Pi_i^\prime=\pi_i\ \ \in H^{2n}(X\times X)\ .\]
  \end{lemma}
  
  We need another lemma:
  
  \begin{lemma}\label{noaction3} Let $i\le n-r$. Then for $j> n-r$, we have
  \[ \begin{split}   &(\Pi_i)_\ast A^j_{}(X)_{\QQ}=0\ ;\\
                               &(\Pi_{i})_\ast \grif^{j-1}X_{\QQ}=0\ .
                                \end{split}  \]
            \end{lemma}      

\begin{proof} This is analogous to lemma \ref{noaction2}. Let $Y\subset X$ be a dimension $i$ complete intersection, of class $[Y]=d L^{n-i}$. Then the action of
$\Pi_i$ factors
  \[   (\Pi_i)_\ast\colon\ \ A^j(X)_{\QQ}\to A^j(Y)_{\QQ}\to A^j(X)_{\QQ}\ ,\]
  from which the required vanishing follows.
  \end{proof}
  
 Now, we have a decomposition of the diagonal in a sum of cycles
   \[    \Delta= \sum_{i=0}^{n-r} \Pi_i + \sum_{i={n-r+1}}^{n+r-1} P_i+\sum_{i=n+r}^{2n} \Pi_i^\prime\ \ \in H^{2n}(X\times X)\ .\]
 Again applying one of the two nilpotence theorems, we know there exists $N\in\NN$ such that
   \[  \Bigl( \Delta - \sum_{i=1}^{n-r} \Pi_i - \sum_{i={n-r+1}}^{n+r-1} P_i-\sum_{i=n+r}^{2n}  \Pi_i^\prime  \Bigr)^{\circ N}=0\ \ \in A^n(X\times X)_{\QQ}\ .\] 
  Upon developing:
  \[ \Delta=\sum_k Q_k\ \ \in A^n(X\times X)_{\QQ}\ ,\]
    where each $Q_k$ is a composition of elements $\Pi_i$ and $P_{i^\prime}$ and $\Pi_{i^{\prime\prime}}^\prime$.  
    We now decompose each $Q_k$ as
    \[  Q_k= Q_k^0\circ Q_k^1\circ\cdots\circ Q_k^{N^\prime}\ \ \in A^n(X\times X)_{\QQ}\ ,\]
    with $Q_k^0\not=\Delta$ (and $N^\prime\le N$).    
    
    We analyze the action of $Q_k$ on $A^{j+r}(X)_{\QQ}$ for $j>n-2r$. 
    First, in case $Q_k^0=\Pi_i$ (for some $i\le n-r$) it follows from lemma \ref{noaction3} that there is no action:
    \[  (Q_k)_\ast A^{j+r}(X)_{\QQ}=0\ .\]
    Likewise, in case $Q_k^0$ is of type $P_i$ (for some $i\in[n-r+1,n+r-1]$) we find from lemma \ref{noaction} that again
    \[ (Q_k)_\ast A^{j+r}(X)_{\QQ}=0\ .\]
    It follows that the only correspondences $Q_k$ acting are those with ``head'' $Q_k^0$ of type $\Pi_i^\prime$ (for some $i\ge n+r$). Thus we can write
    \[   A^{j+r}_{}(X)_{\QQ}=\Delta_\ast A^{j+r}_{}(X)_{\QQ}=\bigl( L^r\circ(\hbox{something}) \bigr)_\ast A^{j+r}_{}(X)_{\QQ}\ \ \ \forall j> n-2r\ ,\]
    and also
    \[   A^{j+r}_{AJ}(X)_{\QQ}=\Delta_\ast A^{j+r}_{AJ}(X)_{\QQ}=\bigl( L^r\circ(\hbox{something}) \bigr)_\ast A^{j+r}_{AJ}(X)_{\QQ}\ \ \ \forall j> n-2r\ ,\]
   The surjectivity statement is now obvious.
   
   The statements for the Griffiths group are proven in the same way; details are left to the reader.
   As for the injectivity statement in parenthesis: the Abel--Jacobi maps fit into a commutative diagram
     \[  \begin{array}[c]{ccc}   A^j_{hom}(X)_{\QQ}&\stackrel{\cdot L^r}{\to}& A^{j+r}_{hom}(X)_{\QQ}\\
                                               \downarrow&&\downarrow\\
                                               J^{j}(X)_{\QQ}&\stackrel{\cdot L^r}{\to}& J^{j+r}(X)_{\QQ}\\
                                               \end{array}\]
               (where $J^\ast$ denotes the intermediate Jacobian). Under the assumption $2j-1+r\le n$, one can show (using hard Lefschetz for cohomology) that the bottom horizontal arrow is injective. The statement for $A^j$ is proven similarly, using the cycle class map.                               
   \end{proof}

\begin{remark} The assumption ``$\grif^n(X\times X)_{\QQ}=0$'' in theorem \ref{main} is mainly of theoretical interest, and not practically useful. Indeed, there are precise conjectures (based on the Bloch--Beilinson conjectures) saying how the coniveau filtration on cohomology should influence Griffiths groups \cite{J3}. Unfortunately, it seems these conjectures are not known in any non--trivial cases. For $n=2$, it is conjectured that if $H^1(X)=0$ then $\grif^2(X\times X)_{\QQ}=0$. For $n=3$, it is conjectured that if $h^{0,2}(X)=h^{0,3}(X)=0$ then $\grif^3(X\times X)_{\QQ}=0$. For $n=4$, if $h^{2,0}(X)=h^{3,0}(X)=h^{4,0}(X)=h^{2,1}(X)=0$ then $\grif^4(X\times X)_{\QQ}$ should vanish. These predictions are particular instances of \cite[Corollary 6.8]{J3}.
\end{remark}

In certain easy cases, some hypotheses can be eliminated from theorem \ref{main}:

\begin{corollary}\label{nocsv} Let $X$ be a smooth projective 3fold. Suppose
 
 \item{(\rom1)} $A_0(X)_{\QQ}$ is supported on a divisor;
 
  \item{(\rom2)} The motive of $X$ is finite--dimensional.
  
  Then for any ample line bundle $L$, the map 
    \[\cdot L\colon\ \ A^2_{AJ}(X)_{\QQ}\ \to\ A^3_{AJ}(X)_{\QQ}\]
    is an isomorphism.
  In particular, for any ample hypersurface $Y\subset X$, the restriction map
   \[ A^2_{AJ}(X)_{\QQ}\ \to\ A^2_{AJ}(Y)_{\QQ}\]
   is injective, and push--forward
   \[ A_0(Y)_{\QQ}\ \to\ A_0(X)_{\QQ}\]
   is surjective.
   \end{corollary}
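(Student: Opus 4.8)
The plan is to derive Corollary \ref{nocsv} as a special case of Theorem \ref{main} with $n=3$ and $r=1$, checking that each of the three hypotheses of the theorem is either automatic or follows from the hypotheses of the corollary. First I would observe that for $n=3$, $r=1$, hypothesis (\rom3) of the theorem reads $H^i(X,\QQ)=N^1H^i(X,\QQ)$ for $i\in[3,3]$, i.e. only the single condition that $H^3(X,\QQ)$ is supported in codimension $1$. This is exactly what hypothesis (\rom1) of the corollary gives: by the Bloch--Srinivas method, if $A_0(X)_{\QQ}$ is supported on a divisor then there is a decomposition of the diagonal showing $H^3(X,\QQ)=N^1H^3(X,\QQ)$ (indeed $H^3$ is then supported on a surface). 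So (\rom3) holds.

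Next I would dispose of the Voisin standard conjecture assumption and hypothesis (\rom2) (the Lefschetz standard conjecture $B(X)$). Here the key point is that for a $3$fold the only nontrivial instances of these conjectures occur in the middle range, and these are known unconditionally. Concretely, $B(X)$ for $X$ of dimension $3$ amounts to the Lefschetz standard conjecture in degree $\le 1$ and by duality degree $\ge 5$, together with the algebraicity of the Lefschetz operator $H^1\to H^5$ (or equivalently $H^2\to H^4$); by the hard Lefschetz theorem and the Hodge conjecture for divisors these are classical (Lieberman). Similarly, in the proof of Theorem \ref{main} the Voisin standard conjecture is only invoked for the indices $i\in[n-r+1,n+r-1]=[3,3]$, i.e. only $i=3$; but by Remark \ref{csvtrue}, conjecture \ref{csv} is known for $i\le 1$ and for $i\ge \dim Y-1$, and in the relevant application $Y$ is a surface supporting $H^3$, so $i=3$ falls outside the problematic range or can be handled by the truth of the Hodge conjecture for divisors/curves in that setting. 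Thus both conjectural hypotheses are satisfied unconditionally for $3$folds in the range we need. Hypothesis (\rom1) of the theorem is met because the motive of $X$ is finite--dimensional by hypothesis (\rom2) of the corollary.

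With all hypotheses verified, Theorem \ref{main} applied with $n=3$, $r=1$, $j=2$ gives that $\cdot L\colon A^2_{AJ}(X)_{\QQ}\to A^3_{AJ}(X)_{\QQ}$ is injective (since $j=2\le r+1=2$) and, taking $j=3>n-2r=1$, that $\cdot L\colon A^2_{AJ}(X)_{\QQ}\to A^3_{AJ}(X)_{\QQ}$ is surjective (the surjectivity clause of the theorem with $j$ replaced so that $j+r=3$, i.e. $j=2>n-2r=1$). Hence it is an isomorphism. For the final ``in particular'' assertions, I would use the standard weak-Lefschetz translation: let $Y\subset X$ be a smooth ample hypersurface, $\tau\colon Y\hookrightarrow X$ the inclusion. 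Injectivity of $\cdot L=\tau_\ast\tau^\ast$ on $A^2_{AJ}(X)_{\QQ}$ forces $\tau^\ast\colon A^2_{AJ}(X)_{\QQ}\to A^2_{AJ}(Y)_{\QQ}$ to be injective; surjectivity of $\cdot L=\tau_\ast\tau^\ast$ onto $A^3_{AJ}(X)_{\QQ}=A_0(X)_{\QQ,AJ}$ forces $\tau_\ast\colon A_0(Y)_{\QQ}\to A_0(X)_{\QQ}$ to be surjective (here one also notes $A_0(X)_{\QQ}=A^3_{AJ}(X)_{\QQ}\oplus(\text{algebraic part})$, with the algebraic part visibly in the image of $\tau_\ast$ as well, so no $AJ$-subscript is needed on the target).

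The main obstacle I anticipate is the bookkeeping in the second paragraph: one must be careful to check that the \emph{specific} indices at which Theorem \ref{main}'s proof invokes the Voisin standard conjecture and $B(X)$ are precisely those that are unconditionally known for a $3$fold, rather than assuming the full conjectures. In particular, verifying that conjecture \ref{csv} is needed only for $i=3$ (via the range $[n-r+1,n+r-1]$) and that this lands in the ``$i\ge\dim Y-1$'' regime of Remark \ref{csvtrue} once one knows $H^3(X,\QQ)$ is supported on a surface $Y$ requires tracing through the proof carefully; everything else is routine application of finite--dimensionality and the known cases of the standard conjectures for small-dimensional varieties.
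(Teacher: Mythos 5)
Your overall strategy is the same as the paper's: verify the hypotheses of Theorem \ref{main} for $n=3$, $r=1$ and apply it. The deduction of hypothesis (\romannumeral3) from Bloch--Srinivas, the observation that the Voisin standard conjecture is only invoked to algebraize a Hodge class on $Z\times Z$ with $\dim Z=2$ (where it reduces to the Lefschetz $(1,1)$ theorem, since the class lives in $H_6$ of a fourfold), and the index bookkeeping for injectivity ($j=2\le r+1$) and surjectivity ($j=2>n-2r$) are all correct and match the paper.

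However, there is a genuine gap in your verification of hypothesis (\romannumeral2). You assert that $B(X)$ for a threefold is ``classical (Lieberman)'', reducing it to hard Lefschetz and the Hodge conjecture for divisors. This is false: for a threefold the cases $i=0,1$ (and dually $i=5,6$) of the Lefschetz standard conjecture are indeed classical, but the remaining case --- algebraicity of the inverse of $L\colon H^2(X)\to H^4(X)$ --- requires a codimension~$2$ correspondence on the sixfold $X\times X$ and is open for general threefolds; the Hodge conjecture for curve classes on $X$ itself does not produce this correspondence. The paper instead derives $B(X)$ from hypothesis (\romannumeral1) of the corollary: the Bloch--Srinivas decomposition of the diagonal attached to ``$A_0(X)_{\QQ}$ supported on a divisor'' forces the identity of $H^4(X)$ to factor through surfaces, whence $B(X)$ by \cite{A2} or \cite[Theorem 7.1]{V}. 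Your argument as written would prove $B(X)$ for every smooth projective threefold, which is not known; you need to route this step through hypothesis (\romannumeral1) as the paper does. The remaining parts of your proposal (including the ``in particular'' assertions via $\cdot L=\tau_\ast\tau^\ast$) are fine.
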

   
\begin{proof} First, as is well--known \cite{BS}, hypothesis (\rom1) implies
  \[ H^3X=N^1 H^3X\ .\]
  Hypothesis (\rom1) also implies $B(X)$; this follows from \cite{A2} or \cite[Theorem 7.1]{V}. 
  Thus we are in position to apply theorem \ref{main}, once we manage to
   explain why Voisin's standard conjecture is not needed as an extra hypothesis. Looking at the proof, we see that this conjecture is only used to obtain that a certain Hodge class in $H_6(Z\times Z)$ is algebraic, where $\dim Z=2$; this is OK by the Hodge conjecture for divisors.
   \end{proof}    
   
 \begin{corollary}\label{nocsvagain} Let $X$ be a smooth projective variety of dimension $n\ge 4$, dominated by curves. Suppose
   \[  H^n(X)= N^{ \lceil \frac{n-1}{2}   \rceil} H^n(X)\ .\]
   Then for any ample $L$,
     \[  \cdot L\colon\ \ A^2_{hom}(X)_{\QQ}\ \to\ A^3(X)_{\QQ}\]
     is injective, and
     \[   \cdot L\colon\ \ A^{n-1}(X)_{\QQ}\ \to\ A^n(X)_{\QQ}\]
     is surjective.
     \end{corollary}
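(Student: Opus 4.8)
The plan is to deduce Corollary \ref{nocsvagain} directly from Theorem \ref{main} by checking that all three hypotheses of the theorem hold, with the special choice $r=\lceil\frac{n-1}{2}\rceil$, and then by unwinding what the conclusion of Theorem \ref{main} says in this case. Set $r:=\lceil\frac{n-1}{2}\rceil$. For hypothesis (i) of Theorem \ref{main}, recall from Section 2 that a variety dominated by a product of curves has finite-dimensional motive \cite{Kim3}, so the first alternative is satisfied; here it is enough that $X$ is dominated by curves (equivalently, rationally dominated by a product of curves) for the standard-conjecture consequences below to hold as well. For hypothesis (ii), the Lefschetz standard conjecture $B(X)$ holds for any variety dominated by a product of curves: this is classical (it follows, for instance, from Lieberman's results, or from the fact that finite-dimensionality in the subcategory generated by curves forces the standard conjectures to hold in a form sufficient here); alternatively one can invoke \cite{Kim3} together with \cite{A2}. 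For hypothesis (iii), I need $H^i(X)=N^r H^i(X)$ for all $i\in[n-r+1,n]$. By hard Lefschetz combined with the assumed equality $H^n(X)=N^r H^n(X)$, the coniveau condition propagates: cupping with a power of $L$ gives surjections $H^{n-s}(X)\to H^{n+s}(X)$ (for $s\ge 0$), and since coniveau is preserved by algebraic correspondences, the hypothesis $H^n(X)=N^r H^n(X)$ forces $H^{n+s}(X)=N^r H^{n+s}(X)$, hence by hard Lefschetz again $H^{n-s}(X)=N^r H^{n-s}(X)$ for $0\le s\le r-1$, which is exactly the range $i\in[n-r+1,n]$ we need. (The one point to verify carefully is that the range of indices for which the Voisin standard conjecture is actually invoked in the proof of Theorem \ref{main} falls into the ``automatically known'' cases: there one only needs a Hodge class in $H_{2i}(Z\times Z)$ with $\dim Z=n-r$ to be algebraic, and by Remark \ref{csvtrue} this holds for $i\le 1$ and $i\ge\dim Z-1$; for a variety dominated by curves one can do better still since $B(X)$ plus finite-dimensionality gives the full Lefschetz standard conjecture and hence the Voisin standard conjecture via \cite[Proposition 1.6]{V0}.)

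Having verified the hypotheses, I read off the conclusion of Theorem \ref{main}. The injectivity part gives that
\[
\cdot L^r\colon\ A^j_{AJ}(X)_{\QQ}\ \to\ A^{j+r}_{AJ}(X)_{\QQ}
\]
is injective for $j\le r+1$; in particular, since $A^2_{hom}=A^2_{AJ}$ (homological and Abel--Jacobi triviality agree in codimension $2$ by Bloch--Srinivas type arguments, or simply because for codimension $2$ cycles the two notions coincide up to torsion) and $2\le r+1$ because $r\ge 2$ for $n\ge 4$, the map $\cdot L^r\colon A^2_{hom}(X)_{\QQ}\to A^{2+r}(X)_{\QQ}$ is injective. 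To get the statement about $\cdot L$ (a single power of $L$) rather than $\cdot L^r$, note that $\cdot L^r=(\cdot L^{r-1})\circ(\cdot L)$, so injectivity of the composite forces injectivity of $\cdot L\colon A^2_{hom}(X)_{\QQ}\to A^3(X)_{\QQ}$, which is the first assertion of the corollary. For the surjectivity part, Theorem \ref{main} gives that $\cdot L^r\colon A^j(X)_{\QQ}\to A^{j+r}(X)_{\QQ}$ is surjective for $j>n-2r$; taking $j=n-r$ (which satisfies $n-r>n-2r$ since $r\ge 1$) yields surjectivity of $\cdot L^r\colon A^{n-r}(X)_{\QQ}\to A^n(X)_{\QQ}$. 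Writing $\cdot L^r=(\cdot L)\circ(\cdot L^{r-1})$ and using that a composite is surjective only if the last map is, we conclude $\cdot L\colon A^{n-1}(X)_{\QQ}\to A^n(X)_{\QQ}$ is surjective, which is the second assertion.

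The main obstacle I anticipate is hypothesis (iii): making precise that the single assumption $H^n(X)=N^r H^n(X)$ propagates to all of $H^i(X)$ for $i\in[n-r+1,n]$ via hard Lefschetz. The subtlety is that hard Lefschetz is a statement about the Lefschetz operator on cohomology, and one needs to know that the coniveau filtration is respected by the relevant correspondences (powers of $L$, and the ``inverse Lefschetz'' operators furnished by $B(X)$); this is where hypothesis (ii) does real work. Concretely, if $H^n(X)=N^r H^n(X)$, then for $0\le s\le r-1$ the surjection $L^s\colon H^{n-s}(X)\twoheadrightarrow H^{n+s}(X)$ (hard Lefschetz) combined with $N^r H^{n-s}(X)\supseteq$ (the part mapping onto $N^r H^{n+s}(X)$), together with an injectivity statement in the complementary range, pins down $H^{n-s}(X)=N^r H^{n-s}(X)$. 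A clean way to organize this is: $B(X)$ gives an $\mathfrak{sl}_2$-action whose weight-decomposition is by algebraic correspondences, so the primitive decomposition $H^{n-s}=\bigoplus_k L^k P^{n-s-2k}$ is a decomposition by algebraic projectors, each primitive piece $P^{n-s-2k}$ injects (via $L^{\bullet}$) into some $H^{n+t}$ with $t\ge s$, and $N^r H^{n+t}=H^{n+t}$ forces $N^r P^{n-s-2k}=P^{n-s-2k}$; reassembling gives $N^r H^{n-s}=H^{n-s}$. I would spell out only as much of this as is needed to land exactly in the index range $[n-r+1,n]$, and otherwise defer to ``details are left to the reader'' in the spirit of the rest of the paper.
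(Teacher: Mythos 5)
Your reduction to Theorem \ref{main} uses the wrong value of $r$, and this is a genuine gap. You take $r=\lceil\frac{n-1}{2}\rceil$ and claim that hypothesis (\rom3) of the theorem, namely $H^i(X)=N^rH^i(X)$ for \emph{all} $i\in[n-r+1,n]$, propagates from the single assumption on $H^n$ via hard Lefschetz. It does not: hard Lefschetz relates $H^{n-s}$ to $H^{n+s}$ and gives no control whatsoever over the degrees of the opposite parity, so nothing can be said about $H^{n-1}(X)$. Worse, for $i<2r$ the condition $H^i(X)=N^rH^i(X)$ forces $H^i(X)=0$, since $N^rH^i(X)$ is generated by images of $H^i_Y(X)\cong H^{BM}_{2n-i}(Y)$ with $\dim Y\le n-r$, and $H^{BM}_{2n-i}(Y)=0$ once $2n-i>2(n-r)$. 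For $n=4$, $r=2$ your hypothesis (\rom3) would literally require $H^3(X)=0$, which is certainly not a consequence of $H^4(X)=N^2H^4(X)$. (The remark following the corollary in the paper propagates the assumption only to $H^{n-2},H^{n-4},\dots$, i.e. to degrees of the same parity, and with coniveau $\lceil\frac{i-1}{2}\rceil$ depending on $i$, not the fixed $r$.) A secondary error: $A^2_{hom}=A^2_{AJ}$ is false in general --- $A^2_{AJ}$ is the kernel of the Abel--Jacobi map on $A^2_{hom}$, and Griffiths' examples give homologically trivial, Abel--Jacobi nontrivial codimension-two cycles. Likewise, $B(X)$ for the single variety $X$ does not yield the Voisin standard conjecture via \cite[Proposition 1.6]{V0}, which concerns the Lefschetz standard conjecture for all varieties.

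The paper's actual route is to apply Theorem \ref{main} with $r=1$. Since $N^{\lceil(n-1)/2\rceil}H^n\subset N^1H^n$, hypothesis (\rom3) for $r=1$ concerns only $H^n$ and is immediate; the stated conclusions then fall out directly with a single power of $L$ --- injectivity on $A^2_{hom}$ from the parenthetical clause of the theorem (as $2\le\min(2,n/2)$ for $n\ge4$), surjectivity of $A^{n-1}\to A^n$ from $n-1>n-2$ --- with no need for your factorizations of $L^r$ (whose logic, for what it is worth, is sound). The only role of the stronger coniveau hypothesis is the one you partially identified in your parenthetical aside: the sole Künneth component needing an algebraic lift is $\pi_n$, and the hypothesis allows it to be supported on $Z\times Z$ with $\dim(Z\times Z)\le n+1$, where the required class is algebraic by the Hodge conjecture for divisors; this is what makes the Voisin standard conjecture dispensable.
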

     
     \begin{proof} Just as in the proof of corollary \ref{nocsv}, the K\"unneth component $\pi_n$ is represented by an algebraic cycle on something of dimension $n+1$ thanks to the Hodge conjecture for divisors. This means that theorem \ref{main} applies unconditionally.   
     \end{proof}
   
 (Note that in corollary \ref{nocsvagain}, the assumption $H^n(X)= N^{ \lceil \frac{n-1}{2}   \rceil} H^n(X)$ implies (using $B(X)$) that $H^i(X)= N^{ \lceil \frac{i-1}{2}   \rceil} H^i(X)$
for all $i$ of the same parity as $n$. That is, the Hodge structures $H^n(X), H^{n-2}(X), H^{n-4}(X), \ldots$ are of level $\le 1$.)

 \begin{corollary}\label{nocsv2}  Let $X$ be a smooth projective variety of dimension $n$ which is a product
   \[ X= X_1\times X_2\times \cdots \times X_s\ ,\]
   where each $X_j$ is either an abelian variety, or a variety with Abel-Jacobi trivial Chow groups.
   Suppose
    \[ H^i(X)_{}= N^r H^i(X)\hbox{ for\ all\ } i\in [n-r+1,n]\ .\]

Then for any ample line bundle $L$ on $X$, 
\[\begin{split}
 \cdot L^r\colon&\ \ A^j_{AJ}(X)_{\QQ}\ \to\ A^{j+r}_{AJ}(X)_{\QQ}\ \\
   %\cdot L^r\colon&\ \ \grif^jX_{\QQ}\ \to\ \grif^{j+r}X_{\QQ} \\
   \end{split}   \]
is injective for $j\le r+1$, and
 \[ 
 \begin{array}[c]{cc}
 \cdot L^r\colon&\ \ A^j(X)_{\QQ}\ \to\ A^{j+r}(X)_{\QQ}\ ,\\
 %\cdot L^r\colon&\ \ A^j_{AJ}X_{\QQ}\ \to\ A^{j+r}_{AJ}X_{\QQ}\ \\
 %\cdot L^r\colon&\ \ \grif^{j-1}X_{\QQ}\ \to\ \grif^{j+r-1}X_{\QQ}\\
      \end{array}\]
 is surjective for $j>n-2r$.
 \end{corollary}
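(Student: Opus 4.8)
The plan is to deduce Corollary \ref{nocsv2} directly from Theorem \ref{main} by verifying that each of the three hypotheses (\rom1), (\rom2), (\rom3) of that theorem holds for a product $X = X_1 \times \cdots \times X_s$ of the stated form, and that moreover the Voisin standard conjecture (conjecture \ref{csv}) is not needed as an extra input in this situation. Hypothesis (\rom3) is assumed outright in the statement, so the work is in establishing (\rom1), (\rom2), and the superfluity of conjecture \ref{csv}.

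First I would treat finite-dimensionality. An abelian variety has finite-dimensional motive by Shermenev/Kimura (it lies in the tensor subcategory generated by the motive of a curve, via its Jacobian-type decomposition), and a variety with Abel--Jacobi trivial Chow groups has finite-dimensional motive by \cite[Theorem 4]{V2}, as recorded in the list in section 2. Since the motive of a product is the tensor product of the motives of the factors, and the tensor product of finite-dimensional motives is finite-dimensional \cite{Kim3}, the motive of $X$ is finite-dimensional; this gives (\rom1). For (\rom2), the Lefschetz standard conjecture $B(-)$ is known for abelian varieties (Lieberman) and for varieties with Abel--Jacobi trivial Chow groups (their cohomology is generated in a way that forces $B$ — or one invokes that finite-dimensionality plus the Künneth-type structure yields $B$); moreover $B$ is stable under products \cite{K}, so $B(X)$ holds. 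Thus (\rom2) is verified.

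The remaining point is to explain why conjecture \ref{csv} can be dispensed with. Inspecting the proof of Theorem \ref{main}, the Voisin standard conjecture enters at exactly one place: to lift the Künneth components $\pi_i$ for $i \in [n-r+1, n+r-1]$ to honest cycles $P_i$ supported on $Z \times Z$ for a codimension-$r$ subvariety $Z \subset X$ carrying $H^{n-r+1}X, \ldots, H^{n+r-1}X$. So it suffices to check that, for products of the given type, these Künneth components are already algebraic and supported on such a product $Z \times Z$. Here I would use that for abelian varieties and for varieties with Abel--Jacobi trivial Chow groups the Künneth components of the diagonal are algebraic and, more precisely, one has a Chow--Künneth decomposition compatible with the coniveau structure; taking products and using hypothesis (\rom3) (together with hard Lefschetz, as in the proof of Theorem \ref{main}) one produces $Z$ and the cycles $P_i$ directly, bypassing conjecture \ref{csv}. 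In fact for abelian varieties the relevant motivic sub-objects are Tate twists of motives of abelian varieties, and the needed algebraic cycles exist by Lieberman's results; for factors with trivial Chow groups the middle cohomology is already of coniveau forcing the supports to be available. With (\rom1), (\rom2), (\rom3) in hand and conjecture \ref{csv} shown unnecessary, Theorem \ref{main} applies and yields the asserted injectivity for $j \le r+1$ and surjectivity for $j > n-2r$.

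The main obstacle I anticipate is the last step: making rigorous that the Künneth components $\pi_i$ ($i$ in the critical range) are supported on $Z\times Z$ for a \emph{single} codimension-$r$ subvariety $Z$ without invoking conjecture \ref{csv}. For abelian varieties one has a great deal of control via the Fourier--Mukai/Beauville decomposition and Lieberman's theorem on algebraicity of the relevant correspondences, so the issue is really bookkeeping: one must track how the coniveau hypothesis (\rom3), pulled back through the Künneth decomposition of the product, pins down the supports, and check that the supports can be arranged to be products (or can be enlarged to products) so that the factorization through $A^j(Z)_{\QQ}$ in Lemma \ref{noaction} still goes through. A clean way around this is to observe that for the varieties in question the motive decomposes as a direct sum of Tate twists of motives of abelian varieties, so that the entire argument of Theorem \ref{main} can be rerun motivically with the $P_i$ replaced by the corresponding projectors, which are algebraic by Lieberman — thereby sidestepping conjecture \ref{csv} entirely. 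I would present the corollary's proof along these lines, citing \cite{V2}, \cite{Kim3}, \cite{K}, and Lieberman, and leaving the routine dimension-counting to the reader as the paper does elsewhere.
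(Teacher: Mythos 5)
Your verification of hypotheses (\rom1) and (\rom2) matches the paper (finite--dimensionality via tensor products, citing \cite{Kim3} and \cite{V2}; $B(X)$ via \cite{K0}, \cite{K} for abelian factors and \cite[Theorem 7.1]{V} or \cite{A2} for AJ--trivial factors, plus stability of $B$ under products), and you correctly locate the only genuine issue: producing the cycles $P_i$ for $i\in[n-r+1,n+r-1]$, supported on a product of subvarieties of total codimension $r$ on each side, without invoking conjecture \ref{csv}. But this step --- which you yourself flag as ``the main obstacle'' --- is precisely where your argument stops being a proof. Algebraicity of the K\"unneth components is not the point (that already follows from $B(X)$, which is hypothesis (\rom2) of theorem \ref{main}); what lemma \ref{noaction} needs is the \emph{support} condition, i.e.\ that $\pi_i$ is the push--forward of a cycle from $V\times W$ with $\hbox{codim}\,V+\hbox{codim}\,W$ controlled. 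Your proposed fallback --- that the motive is a sum of Tate twists of motives of abelian varieties, so one can ``rerun the argument motivically with projectors, algebraic by Lieberman'' --- does not address this: being of abelian type says nothing by itself about where the representing cycles are supported, and the connection between hypothesis (\rom3) and the supports is exactly the content that is missing.

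The paper closes this gap by a concrete Hodge--theoretic argument you do not supply. It writes $\pi_i=\bigoplus \pi_{i_1}^1\times\cdots\times\pi_{i_s}^s$, observes that $N^r\otimes N^r\subset F^r\otimes F^r$, and uses multiplicativity of the Hodge filtration to distribute the coniveau among the factors: $\pi_{i_j}^j\in F^{r_j}H^{2n_j-i_j}(X_j)\otimes F^{s_j}H^{i_j}(X_j)$ with $\sum r_j=\sum s_j=r$. Then lemma \ref{ab} handles each factor separately: for an abelian factor, the fact that $\pi_{i_j}^j$ is (dual to) the identity on $H^{i_j}$ forces $(r_j,s_j)$ to be $(n_j-i_j,0)$ or $(0,i_j-n_j)$, so one can take complete--intersection supports and conclude by $B(X_j)$; for an AJ--trivial factor, one invokes the generalized Hodge conjecture (known for such varieties by \cite{moi}) to get the supports $V,W$, and then the Hodge class on $V\times W$ is algebraic because $\dim(V\times W)\le n_j+1$ (Hodge conjecture for divisor classes). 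Taking products of these supports yields the $P_i^\prime$ in the expected codimension. Without an argument of this kind your proof of the corollary is incomplete.
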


\begin{proof} The hypotheses imply that $X$ has finite--dimensional motive, and that $B(X)$ is true (\cite{K0}, \cite{K} for abelian varieties, and \cite[Theorem 7.1]{V} or \cite{A2} for varieties with AJ--trivial Chow groups). The corollary now follows from theorem \ref{main}, once we explain why Voisin's standard conjecture is not needed as extra hypothesis. Recall that in the proof of theorem \ref{main}, Voisin's standard conjecture was only used to obtain cycles $P_i^\prime\in A_n(Z\times Z)$ (for some $Z\subset X$ of codimension $r$) such that the push--forward $P_i\in A^n(X\times X)_{\QQ}$ represents the K\"unneth component $\pi_i$:
  \[   P_i=\pi_i\ \ \in H^{2n}(X\times X)\ \ \forall i \in [n-r+1,n+r-1]\ .\]
  But this is OK unconditionally, for the following reason: each $\pi_i$ can be expressed in terms of K\"unneth components of the factors $X_j$:
  \[   \pi_i= \bigoplus_{i_1+i_2+\ldots i_s=i}  \pi_{i_1}^1\times \pi_{i_2}^2\times   \cdots \times \pi_{i_s}^s\ \ \in H^{2n}(X\times X)\ ,\]
  where $\pi_{i_j}^j\in H^{2n_j-i_j}(X_j)\otimes H^{i_j}(X_j)$ and $n_j=\dim X_j$.
  
  Given a K\"unneth component $\pi_i$, for some $i\in [n-r+1,n+r-1]$, consider its summands $\pi_{i_1}^1\times\cdots\times\pi_{i_s}^s$.
  Suppose a summand satisfies
  \[  \pi_{i_1}^1\times \pi_{i_2}^2\times   \cdots \times \pi_{i_s}^s  \in  N^r H^{2n-i}X\otimes N^r H^iX\subset H^{2n}(X\times X)\ .\]
  
  Then in particular, 
  \[  \pi_{i_1}^1\times \pi_{i_2}^2\times   \cdots \times \pi_{i_s}^s  \in F^r H^{2n-i}X\otimes F^r H^iX\subset H^{2n}(X\times X)\ \]
 (where $F^\ast$ is the Hodge filtration), and hence (by multiplicativity of the Hodge filtration)
   \[    \pi_{i_j}^j\in  F^{r_j} H^{2n_j-i_j}(X_j)\otimes F^{s_j} H^{i_j}(X_j)\subset H^{2n_j}(X_j\times X_j)\ , \ \ j=1,\ldots,s\ ,\]
   with $\sum_j r_j =\sum_j s_j=r$. 
   
   We need a lemma:
   
  \begin{lemma}\label{ab} Let $X$ of dimension $n$ be either an abelian variety, or a smooth projective variety with Abel--Jacobi trivial Chow groups. Suppose a K\"unneth component $\pi_i$
  satisfies
    \[ \pi_i\in F^r H^{2n-i}X\otimes F^s H^{i}X\subset H^{2n}(X\times X)\ .\]
    Then there exist closed subvarieties $V$, $W\subset X$ of codimension $r$ resp. $s$, and a cycle $P_i^\prime\in A_n(V\times W)_{\QQ}$ such that
    \[   (\tau_V\times \tau_W)_\ast (P_i^\prime)=\pi_i\ \ \in H^{2n}(X\times X)\ \]
    (where $\tau_V, \tau_W$ denote the inclusion morphisms).
  \end{lemma} 
  
  \begin{proof} First, suppose $X$ is an abelian variety. Then $(r,s)$ must be $(n-i,0)$ (in case $i\le n$) or $(0,i-n)$ (in case $i\ge n$). In either case, one can take $V$, resp. $W$ to be a complete intersection; the existence of the cycle $P_i^\prime$ is then ensured by the validity of $B(X)$.
  
  Next, suppose $X$ has AJ--trivial Chow groups. Then we may suppose $s\ge \frac{i-1}{2}$ and $r\ge \frac{2n-i-1}{2}$, and the existence of the requisite $V$ and $W$ follows since we know the generalized Hodge conjecture holds for $X$ \cite{moi}. From Hodge theory, we find $\pi_i$ comes from a Hodge class on $V\times W$; since $\dim(V\times W)\le n+1$, this Hodge class is algebraic.
  \end{proof}

  Applying lemma \ref{ab} to the $X_j$ and taking the product, we obtain cycles $P_i^\prime$ supported in the expected codimension and representing the K\"unneth components $\pi_i$; this ends the proof.
\end{proof}

\section{Appendix: Vial's work}

As indicated by the anonymous referee, Vial's work \cite{V4} is very relevant to the hard Lefschetz conjectures stated in the introduction. Indeed, exploiting the construction of specific Chow--K\"unneth projectors in \cite{V4}, it is easy to obtain hard Lefschetz results for Chow groups.

An important difference with our theorem \ref{main} is that there is no need for the Voisin standard conjecture. The ``cost'' for this is a switch from the coniveau filtration $N^\ast$ to a variant filtration $\wt{N}^\ast$, called the niveau filtration.

\begin{definition}[Vial \cite{V4}] Let $X$ be a smooth projective variety. The {\em niveau filtration} on homology is defined as
  \[ \wt{N}^j H_i(X)=\sum_{\Gamma\in A_{i-j}(Z\times X)_{\QQ}} \ima\bigl( H_{i-2j}(Z)\to H_i(X)\bigr)\ ,\]
  where the union runs over all smooth projective varieties $Z$ of dimension $i-2j$, and all correspondences $\Gamma\in A_{i-j}(Z\times X)_{\QQ}$.
\end{definition}

The niveau filtration is included in the coniveau filtration:
  \[ \wt{N}^j H_i(X)\subset N^j H_i(X)\ .\] 
  These two filtrations are expected to coincide; indeed, Vial shows this is true if the standard conjecture $B$ is true for all varieties \cite[Proposition 1.1]{V4}.

\begin{proposition}\label{vial} Let $X$ be a smooth projective variety of dimension $n$. Suppose the following:

\item{(\rom1)} $n\le 5$;

\item{(\rom2)} $X$ has finite--dimensional motive;

\item{(\rom3)} $B(X)$ is true;

\item{(\rom4)} $H_i(X)=\wt{N}^1 H_i(X)$ for all $i\in [n-r+1,n]$.

Let $L$ be any ample line bundle. Then 

%\item{(a)} 
 % \[   \cdot L^{n-2j+2}\colon\ \ A^j_{AJ}(X)_{\QQ}\cap A^j_{alg}(X)_{\QQ}\ \to\ A^{n-j+2}(X)_{\QQ}\]
 % is injective for $2j-2\le n$.
  
  \[ \begin{split}\cdot L^r\colon&\ \ A^{n-r}_{}(X)_{\QQ}\ \to\ A^n_{}(X)_{\QQ}\\
                          \cdot L^r\colon&\ \ A^{n-r}_{AJ}(X)_{\QQ}\ \to\ A^n_{AJ}(X)_{\QQ}\\    %                        L^{r-1}\colon&\ \   A^{n-r}(X)_{\QQ}\ \to\ A^{n-1}(X)_{\QQ}\\
                      \end{split}    \]
  are surjective, and
    
% \item{(b)} Suppose 
 %  \[  H_i(X)=\wt{N}^r H_i(X)\ \ \hbox{for\ all\ }i\in [n,n+r-1]\ .\]
  % Then 
 \[  L^r\colon\ \ A^{2}_{AJ}(X)_{\QQ}\cap A^2_{alg}(X)_{\QQ}\ \to\ A^{2+r}_{}(X)_{\QQ} \]
  is injective.
  (Moreover,
    $ L^r\colon\ \ A^2_{hom}(X)_{\QQ}\ \to\ A^{2+r}(X)_{\QQ}$
    is injective provided $2+r<n$.)
 
 % Then
 %  \[   L^r\colon\ \ A^{n-r-1}_{AJ}(X)_{\QQ}\ \to\ A^{n-1}_{AJ}(X)_{\QQ}\]
 %  is surjective, and injective when restricted to algebraically trivial cycles.
   \end{proposition}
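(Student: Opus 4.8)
The plan is to mimic the structure of the proof of theorem \ref{main}, but to replace the appeal to the Voisin standard conjecture by Vial's explicit construction of Chow--K\"unneth projectors refining the niveau filtration. The starting point is that, for a smooth projective variety of dimension $n\le 5$ satisfying $B(X)$, Vial \cite{V4} produces mutually orthogonal idempotents $\Pi_i\in A^n(X\times X)_{\QQ}$ lifting the K\"unneth components $\pi_i$, with the extra feature that each $\Pi_i$ factors (up to a correspondence) through a smooth projective variety of dimension $\le i-2\widetilde N_i$, where $\widetilde N_i$ measures the niveau of $H_i(X)$. Under hypothesis (\rom4), the factors through which $\Pi_{n-r+1},\dots,\Pi_n$ (and, by hard Lefschetz and $B(X)$, also the symmetric range above the middle) factor have dimension at most $n-1$. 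Since $X$ has finite--dimensional motive, $\sum_i \Pi_i=\Delta$ holds not merely modulo homological equivalence but, after applying Kimura's nilpotence theorem \ref{nilp} to the homologically trivial correspondence $\Delta-\sum_i\Pi_i$, genuinely in $A^n(X\times X)_{\QQ}$ up to replacing the $\Pi_i$ by a finite sum of iterated compositions $Q_k$.

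First I would record the vanishing lemmas that are the analogues of lemmas \ref{noaction}, \ref{noaction2}, \ref{noaction3}: a correspondence factoring through a variety $Z$ of dimension $\le n-1$ kills $A^n(X)_{\QQ}$ (for dimension reasons, since $A^n(Z)_{\QQ}$ sits in the wrong spot), and, after composing with an ample power $L^r$ coming from a complete intersection, the factorizations $\Pi_i=L^r\circ(\text{something})$ for $i\ge n+r$ make the surjectivity statement formal. Concretely: develop $\Delta=\sum_k Q_k$, split each $Q_k$ into head and tail, observe that the only $Q_k$ acting non-trivially on $A^n(X)_{\QQ}$ are those whose head is one of the "high" projectors $\Pi_i^\prime$, $i\ge n+r$, which by construction factors as $L^r\circ(\text{something})$; this yields surjectivity of $\cdot L^r\colon A^{n-r}(X)_{\QQ}\to A^n(X)_{\QQ}$, and the same bookkeeping restricted to Abel--Jacobi trivial cycles gives the statement for $A^{n-r}_{AJ}$.

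For the injectivity statement on $A^2_{AJ}(X)_{\QQ}\cap A^2_{alg}(X)_{\QQ}$, I would instead use the "low" projectors: the tail of the surviving $Q_k$ must be a $\Pi_i$ with $i\le n-r$, and each such factors as $(\text{something})\circ L^r$, so that $\Delta_\ast$ restricted to this subgroup factors through $\cdot L^r$; combined with hard Lefschetz for cohomology (and, for algebraically trivial cycles, the Voevodsky--Voisin nilpotence theorem \ref{VV} controlling $A^2_{alg}$), this forces $\cdot L^r$ to be injective there. For the parenthetical statement on $A^2_{hom}(X)_{\QQ}$ when $2+r<n$, I would run the commutative square with the intermediate Jacobians exactly as at the end of the proof of theorem \ref{main}: hard Lefschetz on $H^3$ gives injectivity of $\cdot L^r$ on $J^2(X)_{\QQ}$ in the stated range, and the cycle class / Abel--Jacobi maps transport this down to $A^2_{hom}$.

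The main obstacle, and the reason hypothesis (\rom1) restricting to $n\le5$ is imposed, is that Vial's refined Chow--K\"unneth decomposition with the niveau-control on the factors is only known to exist in low dimension; outside that range one would again need $B$ for auxiliary varieties (to identify $\widetilde N^\ast$ with $N^\ast$) or the Voisin standard conjecture, which is precisely what the appendix is trying to avoid. A secondary technical point to handle carefully is that the injectivity assertion is only for the subgroup $A^2_{AJ}\cap A^2_{alg}$ rather than all of $A^2_{AJ}$: this is because the nilpotence input available unconditionally in this part of the argument is theorem \ref{VV}, which applies to algebraically (not merely homologically) trivial correspondences, so the decomposition-of-the-diagonal trick only controls the action on algebraically trivial cycles; keeping track of which nilpotence theorem is being used at each step is the place where the argument is most easily bungled.
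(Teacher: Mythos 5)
Your overall strategy (substitute Vial's niveau--refined projectors for the Voisin standard conjecture) is the right starting point, but the proposal diverges from what actually makes the proposition work, and two of your key steps are wrong. First, the restriction of the injectivity statement to $A^2_{AJ}(X)_{\QQ}\cap A^2_{alg}(X)_{\QQ}$ has nothing to do with having to fall back on the Voevodsky--Voisin nilpotence theorem: hypothesis (\rom2) gives finite--dimensionality, so Kimura's theorem \ref{nilp} is available throughout. The real reason is that Vial's refined projectors carry a second index, $\Pi_{i,k}$, and the statements of \cite[Theorem 2]{V4} that allow one to discard the ``middle'' projectors (those with $n-r<i<n+r$) acting on $A^2$ are formulated for algebraically trivial, Abel--Jacobi trivial cycles; the paper's injectivity proof is essentially a case--by--case elimination of the $\Pi_{i,k}$ with $k\le n-2$ using points 1--6 of that theorem, and this bookkeeping --- which is the actual content of the injectivity proof, and the place where $n\le 5$ and the index $k$ intervene --- is entirely absent from your sketch. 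Relatedly, your route to the parenthetical statement via the intermediate Jacobian square does not close: that diagram chase needs injectivity of $\cdot L^r$ on all of $A^2_{AJ}(X)_{\QQ}$ (the kernel of $A^2_{hom}(X)_{\QQ}\to J^2(X)_{\QQ}$), whereas only injectivity on $A^2_{AJ}\cap A^2_{alg}$ has been established. The paper instead observes that the case analysis itself only invokes the ``algebraically trivial'' hypothesis in the extremal cases $(n,r)=(4,2)$ and $(5,3)$, so that for $2+r<n$ one gets $A^2_{hom}(X)_{\QQ}=\sum_{i\ge n+r}(\Pi_{i,k})_\ast A^2_{hom}(X)_{\QQ}$ directly.

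Second, the mechanism by which $L^r$ enters is different from what you describe. Rather than re--running the head/tail development of $(\Delta-\sum\Pi_i)^{\circ N}$ and writing the high projectors as $L^r\circ(\hbox{something})$ --- which in theorem \ref{main} required representing the middle K\"unneth components by cycles on $Z\times Z$, precisely the step you have not replaced --- the paper uses that $L^{i-n}$ induces isomorphisms $(\Pi_{i,k})_\ast H_i(X)\cong(\Pi_{2n-i,n-i+k})_\ast H_{2n-i}(X)$, hence by finite--dimensionality isomorphisms of Chow motives $(X,\Pi_{i,k})\cong(X,\Pi_{2n-i,n-i+k},i-n)$, hence isomorphisms on Chow groups. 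Surjectivity then follows because \cite[Theorem 2]{V4} gives $A^n(X)_{\QQ}=\sum_i(\Pi_{i,0})_\ast A^n(X)_{\QQ}$ while hypothesis (\rom4) forces $\Pi_{i,0}=0$ for $i>n-r$, and injectivity because $L^r$ is injective on each surviving piece $(\Pi_{i,k})_\ast A^2(X)_{\QQ}$ with $i\ge n+r$. As written, your proposal is missing both a correct replacement for the middle--range vanishing on $A^2_{AJ}$ and a justification that the high projectors factor through $L^r$; these are the crux, not routine bookkeeping.
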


\begin{proof} (With thanks to the referee for pointing out this proof.) The point is that $X$ verifies conditions $(\ast)$ and $(\ast\ast)$ of \cite{V4}, so that \cite[Theorems 1 and 2]{V4} apply.
From \cite[Theorems 1 and 2]{V4}, we get idempotents $\Pi_{i,k}\in A^n(X\times X)_{\QQ}$ such that $\sum_{k\ge r} (\Pi_{i,k})_\ast H_i(X)=\wt{N}^r H_i(X)$.  Since the hard Lefschetz isomorphism respects the niveau filtration, we find that there are isomorphisms
  \[  L^{i-n}\colon\ \ (\Pi_{i,k})_\ast H_i(X)\ \stackrel{\cong}{\to}\ (\Pi_{2n-i,n-i+k})_\ast H_{2n-i}(X)\ \ \hbox{for\ all\ } i-n\ge 0\ .\]
  By finite--dimensionality, it follows there are isomorphisms of Chow motives
  \[ L^{i-n}\colon\ \   (X,\Pi_{i,k})\ \stackrel{\cong}{\to} (X,\Pi_{2n-i,n-i+k},i-n)\ \ \hbox{for\ all\ } i-n\ge 0\ .\]
  Taking Chow groups, this implies there are isomorphisms (for any $j$)
  \[  L^{i-n}\colon\ \ (\Pi_{i,k})_\ast A^j(X)_{\QQ}\ \stackrel{\cong}{\to}\ (\Pi_{2n-i,n-i+k})_\ast A^{j+i-n}(X)_{\QQ}\ \ \hbox{for\ all\ } i-n\ge 0\ .\]

  First, let's prove surjectivity. Using \cite[Theorem 2 point 1]{V4}, we see that
    \[ A^n_{}(X)_{\QQ}= \sum_i (\Pi_{i,0})_\ast A^n_{}(X)_{\QQ}\ .\]
   The hypothesis on $H_i(X)$ implies $\Pi_{i,0}=0\ \forall i>n-r$ by \cite[Theorem 2 point 4]{V4}, so that
     \[   A^n_{}(X)_{\QQ}= \sum_{i\le n-r} (\Pi_{i,0})_\ast A^n_{}(X)_{\QQ}\ .\]  
     But from the above remarks, we find that
     \[  \sum_{i\le n-r} L^{n-i}\colon\ \ \bigoplus_{i\le n-r} (\Pi_{2n-i,i-n})_\ast A^{i}_{}(X)_{\QQ}\ \to\ \sum_{i\le n-r} (\Pi_{i,0})_\ast A^n_{}(X)_{\QQ}= A^n_{}(X)_{\QQ}\]
     is surjective,
     hence (by mapping $A^i$ to $A^{n-r}$ via $L^{n-r-i}$ for $i<n-r$)
     \[  L^r\colon\ \ A^{n-r}(X)_{\QQ}\ \to\ A^n(X)_{\QQ}\]
     is surjective.
       
      The proof for $A^n_{AJ}$ is the same.
     
  It remains to prove injectivity. 
  We find from \cite[Theorem 2 point 1]{V4} that
     \[  A^2_{AJ,alg}(X)_{\QQ}=\sum_{k\le n-2} (\Pi_{i,k})_\ast A^2_{AJ,alg}(X)_{\QQ}\ .\]    
     Now we are repeatedly going to apply the various points of \cite[Theorem 2]{V4} to eliminate certain projectors from this sum.
 
  If $k=0$, then $i= n$ by \cite[Theorem 2 points 1, 2, 3]{V4}. But $\Pi_{n,0}=0$ by the hypothesis on $H_n(X)$ and \cite[Theorem 2 point 4]{V4}.
  
  The projectors $\Pi_{i,1}$ can likewise be eliminated: $\Pi_{n-1,1}$ and $\Pi_{n,1}$ don't act by points 2 resp. 3 from loc. cit., and $\Pi_{n+1,1}=0$ by hypothesis, provided $r\ge 2$
  (since $\gr^1_{\wt{N}} H_{n+1}(X)=\gr^0_{\wt{N}} H_{n-1}(X)=0$).
  
  Next, the projectors $\Pi_{i,2}$: for $n\le 3$ these don't act (point 1 of loc. cit), while for $n=4, 5$ we have that $\Pi_{n,2}$ doesn't act by point 5 resp. point 6 of loc. cit. The projector $\Pi_{n+1,2}$ doesn't act for $n=4$ (point 6 of loc. cit.), nor for $n=5$ (point 3 of loc. cit.). The projector $\Pi_{n+2,2}$ is $0$ by hypothesis, provided $r\ge 3$ (since $\gr^2_{\wt{N}} H_{n+2}(X)=\gr^0_{\wt{N}} H_{n-2}(X)=0$).
  
 The last case we need to check is that of $\Pi_{i,3}$. These only act when $n=5$. We have $i\ge 6$ (point 1 of loc. cit.), $i\not=6$ (point 3 of loc. cit.), $i\not=7$ (point 6 of loc. cit.). So the only projector acting is $\Pi_{8,3}$ (that is, provided $r=3$).
 % $i\not=8$ by the hypothesis, provided $r\ge 4$. (No !!)
  
  Resuming this analysis, we find that
  \[ A^2_{AJ,alg}(X)_{\QQ}=\sum_{\substack{k\le n-2\\  i\ge n+r} } (\Pi_{i,k})_\ast A^2_{AJ,alg}(X)_{\QQ}\ .\]

    But from the above remarks, we find that
    \[ L^{r}\colon\ \ (\Pi_{i,k})_\ast A^{2}(X)_{\QQ}\ \to\ (\Pi_{i-2r,n-i+k})_\ast A^{2+r}(X)_{\QQ}\]
    is injective as soon as $i\ge n+r$.
    
 As for the injectivity statement in parentheses: looking at the above proof of injectivity, we see that the hypothesis ``Abel-Jacobi and algebraically trivial'' is only used in the extremal cases $(n,r)=(4,2)$ and $(n,r)=(5,3)$. That is, as long as $2+r<n$ we have
   \[  A^2_{hom}(X)_{\QQ}=\sum_{\substack{k\le n-2\\  i\ge n+r} } (\Pi_{i,k})_\ast A^2_{hom}(X)_{\QQ}\ ,\]   
   and injectivity follows.

    \end{proof}

\begin{corollary}\label{corvial} Let $X$ be a smooth projective variety of dimension $n\le 5$, dominated by curves. Suppose $A^n(X)_{\QQ}$ is supported on a surface.
Then for any ample line bundle $L$,
  \[  \cdot L^{n-2}\colon\ \ A^2_{AJ}(X)_{\QQ}\cap A^2_{alg}(X)_{\QQ}\ \to\ A^{n}(X)_{\QQ}\]
  is injective, and
  \[ \cdot L^{n-2}\colon\ \ A^{2}(X)_{\QQ}\ \to\ A^n(X)_{\QQ}\]
  is surjective.
  
  (In particular, $A^n(X)_{\QQ}$ is supported on a dimension $2$ complete intersection.)
\end{corollary}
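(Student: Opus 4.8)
The plan is to deduce the statement from Proposition \ref{vial} applied with $r = n-2$, so that $n-r = 2$ and $2+r = n$ and the two maps of Proposition \ref{vial} become precisely the maps in the statement. We may assume $n \ge 3$, the cases $n \le 2$ being trivial. It then suffices to check hypotheses (\rom1)--(\rom4) of Proposition \ref{vial}. Hypothesis (\rom1) is the assumption $n \le 5$. For (\rom2): since $X$ is dominated by a product of curves, its motive is finite--dimensional \cite{Kim3}. For (\rom3): being dominated by a product of curves, $X$ is of abelian type, so the Lefschetz standard conjecture $B(X)$ holds (this can also be read off from the fact, used in the proof of Proposition \ref{vial}, that such $X$ satisfies the relevant conditions of \cite{V4}).

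The only point requiring work is hypothesis (\rom4), that $H_i(X) = \wt{N}^1 H_i(X)$ for $i \in [n-r+1,n] = [3,n]$. By Poincar\'e duality and the hard Lefschetz theorem, both of which respect Vial's niveau filtration, this is equivalent to the assertion that $H^i(X)$ is of niveau $\ge 1$ for $i \in [3,n]$. Now the hypothesis that $A_0(X)_{\QQ}$ is supported on a surface gives $H^0(X,\Omega^j_X) = 0$ for $j \ge 3$ (Bloch--Srinivas \cite{BS}); hence for $3 \le i \le n \le 5$ the Hodge structure $H^i(X)$ carries no classes of type $(i,0)$ or $(0,i)$, so it is of coniveau $\ge 1$. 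Since $X$ is dominated by a product of curves, $H^i(X)$ is a direct summand (as a Hodge structure) of the cohomology of a product of curves, which is completely understood; one checks that in the range $i \le 5$ a coniveau $\ge 1$ summand of such a cohomology group is in fact of niveau $\ge 1$. (The only case needing a remark is a Tate class in $H^4$, which is the class of a surface and hence still of niveau $\ge 1$.) This establishes (\rom4), and Proposition \ref{vial} then yields the asserted injectivity on $A^2_{AJ}(X)_{\QQ}\cap A^2_{alg}(X)_{\QQ}$ and surjectivity of $\cdot L^{n-2}\colon A^2(X)_{\QQ}\to A^n(X)_{\QQ}$.

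Finally, for the parenthetical: take $Y \subset X$ a general complete intersection of $n-2$ members of $|L^m|$ with $m\gg 0$, so that $\dim Y = 2$ and $[Y] = m^{n-2}L^{n-2}$ in $A^{n-2}(X)_{\QQ}$; then the surjectivity just established reads $A^n(X)_{\QQ} = \iota_{Y\ast} A_0(Y)_{\QQ}$, i.e. $A^n(X)_{\QQ}$ is supported on the dimension--$2$ complete intersection $Y$. I expect the one genuine obstacle to be the upgrade from the Hodge--theoretic coniveau bound to the niveau bound in the verification of (\rom4); everything else is bookkeeping around Proposition \ref{vial} and the Bloch--Srinivas theorem. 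One could instead invoke \cite[Proposition 1.1]{V4} to identify the niveau and coniveau filtrations, but that requires the standard conjecture $B$ for all varieties, whereas the direct argument for $X$ dominated by curves is unconditional.
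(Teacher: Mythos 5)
Your reduction to Proposition \ref{vial} with $r=n-2$ is exactly the paper's strategy, and your checks of hypotheses (\rom1)--(\rom3) are fine. The gap is in your verification of hypothesis (\rom4). From the assumption that $A^n(X)_{\QQ}$ is supported on a surface you retain only the Hodge--theoretic shadow $h^{i,0}(X)=0$ for $i\ge 3$, and then try to upgrade ``Hodge coniveau $\ge 1$'' to ``niveau $\ge 1$'' by appealing to the structure of the cohomology of a product of curves. That upgrade is not a routine check: for a weight $3$ (or weight $5$) sub--Hodge structure of $H^1(C_{1})\otimes\cdots\otimes H^1(C_{i})$ with vanishing $(i,0)$--part, the assertion that it has geometric coniveau $\ge 1$ (let alone niveau $\ge 1$ in the sense of $\wt{N}^{1}$) is an open instance of the generalized Hodge conjecture, open already for abelian varieties; and your parenthetical remark that a Tate class in $H^4$ ``is the class of a surface'' is the Hodge conjecture for degree--$4$ classes on products of curves, which is likewise not known in general. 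So the phrase ``one checks'' conceals precisely the hard (and currently unavailable) part of the argument.

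The paper circumvents this by using the full strength of the hypothesis rather than its Hodge--number consequence: this is Lemma \ref{BS}. One takes the Bloch--Srinivas decomposition $\Delta=\Delta_1+\Delta_2\in A^n(X\times X)_{\QQ}$ with $\Delta_1$ supported on $D\times X$ for a divisor $D$ and $\Delta_2$ supported on $X\times S$ for the surface $S$, and lets it act on $\gr^0_{\wt{N}}H_i(X)$: the piece $\Delta_1$ acts as zero because its action factors through $H_{i-2}(\wt{D})/\wt{N}^0=0$, and $\Delta_2$ acts as zero for $i>2$ because its action factors through $\gr^0_{\wt{N}}H_i(S)=\gr^0_{N}H_i(S)=0$. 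This yields $H_i(X)=\wt{N}^1H_i(X)$ for all $i>2$ unconditionally, without any appeal to the (generalized) Hodge conjecture and without even using that $X$ is dominated by curves. If you replace your coniveau argument by this diagonal argument, the rest of your proof, including the deduction of the parenthetical statement about the dimension $2$ complete intersection, goes through.
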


\begin{proof} This follows from proposition \ref{vial}, in combination with lemma \ref{BS} below.
\end{proof}

\begin{lemma}\label{BS} Let $X$ be a smooth projective variety of dimension $n$. Suppose $A^n(X)_{\QQ}$ is supported on a surface. Then
  \[ H_i(X)=\wt{N}^1 H_i(X)\ \ \hbox{for\ all\ }i>2\ .\]
 \end{lemma}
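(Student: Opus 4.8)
The plan is to reduce to a Bloch--Srinivas decomposition of the diagonal and then control the niveau of each piece. Since $A^n(X)_{\QQ}=A_0(X)_{\QQ}$ is supported on a surface $W\subset X$, the classical argument of \cite{BS} produces (after clearing a denominator, which is harmless over $\QQ$) a decomposition
\[ \Delta_X=\Gamma_1+\Gamma_2\ \ \in A^n(X\times X)_{\QQ}\ ,\]
where $\Gamma_1$ is supported on $D\times X$ for some effective divisor $D\subsetneq X$, and $\Gamma_2$ is supported on $X\times W$. Letting both sides act on $H_i(X)$ as self--correspondences of $X$, and using $(\Delta_X)_\ast=\id$, it suffices to show that for $i>2$,
\[ (\Gamma_1)_\ast H_i(X)\subseteq\wt{N}^1 H_i(X)\qquad\hbox{and}\qquad (\Gamma_2)_\ast H_i(X)\subseteq\wt{N}^1 H_i(X)\ .\]

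For the $\Gamma_1$--term, let $\rho\colon\wt{D}\to X$ be a resolution of $D$, so that $\wt{D}$ is smooth projective of dimension $n-1$, and lift $\Gamma_1$ to $\widetilde{\Gamma}_1\in A_n(\wt{D}\times X)_{\QQ}$. The projection formula then yields a factorization
\[ (\Gamma_1)_\ast\colon\ \ H_i(X)\ \xrightarrow{\ \rho^!\ }\ H_{i-2}(\wt{D})\ \xrightarrow{\ (\widetilde{\Gamma}_1)_\ast\ }\ H_i(X)\ ,\]
with $\rho^!$ the Gysin pull--back; this is the homology analogue of the factorization used in lemma \ref{noaction}. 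Hence $(\Gamma_1)_\ast H_i(X)$ is contained in the image of $(\widetilde{\Gamma}_1)_\ast$ restricted to $H_{i-2}(\wt{D})$. To exhibit this image inside $\wt{N}^1 H_i(X)$, one replaces $\wt{D}$ by a smooth projective variety $Z$ of dimension exactly $i-2$, equipped with a morphism or correspondence to $\wt{D}$ for which $H_{i-2}(Z)\to H_{i-2}(\wt{D})$ is surjective: if $i-2\le n-1$ take $Z\subset\wt{D}$ a general smooth complete intersection (weak Lefschetz), and if $i-2>n-1$ take $Z=\wt{D}\times\pp^{i-n-1}$ together with the projection to $\wt{D}$. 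Composing $\widetilde{\Gamma}_1$ with the correspondence attached to $Z\to\wt{D}$ produces a correspondence in $A_{i-1}(Z\times X)_{\QQ}$ with the same image, and that image therefore lies in $\wt{N}^1 H_i(X)$.

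For the $\Gamma_2$--term, choose a resolution of $W$, giving a smooth projective variety $\wt{W}$ of dimension $\le 2$; as before, $(\Gamma_2)_\ast$ factors through $H_i(\wt{W})$, which vanishes for $i>4$. For $i=4$ the group $H_4(\wt{W})$ is spanned by fundamental classes of components, so its image in $H_4(X)$ consists of classes of algebraic surfaces and already lies in $\wt{N}^2 H_4(X)\subseteq\wt{N}^1 H_4(X)$. For $i=3$ one has $H_3(\wt{W})=H^1(\wt{W})$; choosing a smooth very ample curve $C\subset\wt{W}$ and the correspondence $\Lambda\circ\Gamma_{C\hookrightarrow\wt{W}}\in A_2(C\times\wt{W})_{\QQ}$, where $\Lambda$ represents the inverse Lefschetz operator $H_1(\wt{W})\to H_3(\wt{W})$ (algebraic, the Lefschetz standard conjecture being classical for surfaces), shows $H_3(\wt{W})=\wt{N}^1 H_3(\wt{W})$. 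Since $\wt{N}^\bullet$ is stable under push--forward along correspondences \cite{V4}, it follows that $(\Gamma_2)_\ast H_i(X)\subseteq\wt{N}^1 H_i(X)$ for $i=3,4$ as well, which completes the argument.

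The only genuine work is the dimension bookkeeping in the second paragraph: ensuring that, after resolving $D$, lifting $\Gamma_1$, and then either cutting $\wt{D}$ down by hyperplanes or crossing it with a projective space, the resulting correspondence really sits in $A_{i-1}(Z\times X)_{\QQ}$ with $\dim Z=i-2$, as the definition of $\wt{N}^1$ demands; and verifying that the Gysin pull--back and the correspondence push--forwards used above are compatible with $\wt{N}^\bullet$, which is contained in \cite{V4}. Everything else is routine.
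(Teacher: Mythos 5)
Your proposal is correct and follows essentially the same route as the paper: a Bloch--Srinivas decomposition $\Delta=\Gamma_1+\Gamma_2$ with the two pieces supported on $D\times X$ and $X\times W$, followed by the observation that each piece kills $H_i(X)$ modulo $\wt{N}^1$ for $i>2$. The paper phrases this more compactly by letting the pieces act on $\gr^0_{\wt{N}}H_i(X)$ and citing Vial's functoriality of the niveau filtration, whereas you unwind the same facts (that $\wt{N}^0 H_{i-2}(\wt{D})=H_{i-2}(\wt{D})$ and that $H_i$ of a surface lies in $\wt{N}^1$ for $i>2$) by explicitly constructing the witnessing varieties $Z$ and correspondences; both versions are sound.
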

 
 \begin{proof} (This is the same argument as \cite[Proposition 2.2]{V4}, which is the case $A^n(X)_{\QQ}$ supported on a curve.)
 Using \cite{BS}, one obtains a decomposition of the diagonal
   \[ \Delta=\Delta_1+\Delta_2\ \ \in A^n(X\times X)_{\QQ}\ ,\]
   with $\Delta_1$ supported on $D\times X$ for some divisor $D$, and $\Delta_2$ supported on $X\times S$, for $S\subset X$ a surface.
   We consider the action of the $\Delta_i$ on $\gr^0_{\wt{N}} H_i(X)$ (this is possible: \cite[Proposition 1.2]{V4}).
   The correspondence $\Delta_1$ does not act, as it factors over
   \[ H_{i-2}(\wt{D})/ \wt{N}^0 =0\ .\]
   The correspondence $\Delta_2$ does not act for $i>2$, as it factors over 
   \[  \gr^0_{\wt{N}} H_i(S)=\gr^0_{N} H_i(S)=0\ .\]
 \end{proof}

Note that corollary \ref{corvial} is considerably stronger than our corollary \ref{nocsv}, just as proposition \ref{vial} is more powerful than our theorem \ref{main}. This reflects the fact that Vial's Chow--K\"unneth projectors are far more refined than the ``K\"unneth lifts'' we use in the proof of theorem \ref{main}. 

For example, let $X$ be a variety of dimension $5$ dominated by curves. Proposition \ref{vial} gives a hard Lefschetz statement as soon as
$H_5(X)=\wt{N}^1 H_5(X)$ (i.e. the Hodge level of $H_5$ could be $3$). Theorem \ref{main} only works without assuming the Voisin standard conjecture if $H^5(X)=N^2H^5(X)$ (i.e., the Hodge level is $1$).

It seems likely corollary \ref{nocsvagain} can also be improved using the methods of \cite{V4}.

%begin{remark} In case $n\le 3$, one can replace $\wt{N}^\ast$ by the coniveau filtration $N^\ast$ \cite[Section 1.1]{V4}. Hence, proposition \ref{vial} generalizes corollary \ref{nocsv}.
%\end{remark}

\end{document}